\renewcommand*{\backref}[1]{}
\renewcommand*{\backrefalt}[4]{%
  \ifcase #1 %
    No citations.
  \or
    ($\uparrow$ #4).%
  \else
    ($\uparrow$ #4).%
  \fi%
}
\theoremstyle{plain}
\newtheorem{theorem}{Theorem}[section]
\newtheorem{lemma}[theorem]{Lemma}
\newtheorem{definition}[theorem]{Definition}
\newtheorem{proposition}[theorem]{Proposition}
\newtheorem{remark}[theorem]{Remark}
\newtheorem*{example}{Example}
\numberwithin{equation}{section}
\newcommand{\bigslant}[2]{{\raisebox{.2em}{$#1$}\left/\raisebox{-.2em}{$#2$}\right.}}
\def\moverlay{\mathpalette\mov@rlay}
\def\mov@rlay#1#2{\leavevmode\vtop{%
   \baselineskip\z@skip \lineskiplimit-\maxdimen
   \ialign{\hfil$\m@th#1##$\hfil\cr#2\crcr}}}
\newcommand{\charfusion}[3][\mathord]{
    #1{\ifx#1\mathop\vphantom{#2}\fi
        \mathpalette\mov@rlay{#2\cr#3}
      }
    \ifx#1\mathop\expandafter\displaylimits\fi}
\newcommand\Overline[2][1pt]{%
    \begin{tikzpicture}[baseline=(a.base)]
      \node[inner xsep=0pt,inner ysep=1.5pt] (a) {$#2$};
      \draw[line width= #1] (a.north west) -- (a.north east);
    \end{tikzpicture}
    }
\newcommand{\la}[1]{\mathcal{#1}}
\newcommand{\C}{\mathbb{C}}
\newcommand{\R}{\mathbb{R}}
\newcommand{\inv}[1]{\ensuremath{\sigma_{#1}}}
\newcommand{\invs}[1]{\ensuremath{\sigma^\star_{#1}}}
\newcommand{\rst}[1]{\ensuremath{\sigma^{#1}}}
\begin{document}
\baselineskip=15.5pt

\title{Moduli of Real (res. Quaternionic) $\la{L}$-connections}
\author{Ayush~Jaiswal}
\address{Department of Mathematics, IISER Tirupati,
 Srinivasapuram-Jangalapalli Village, Panguru (G.P) Yerpedu Mandal, Tirupati - 517619, Chitoor District, Andhra Pradesh India.}
\email{ayushjwl.math@gmail.com}
\subjclass[2000]{Primary: 14H60; Secondary: 53C07, 30F50}
\keywords{}
\thanks{}
\date{}

\begin{abstract}
We have studied irreducible real (respectively, quaternionic) Lie algebroid connections and prove that the Gauge theoretic moduli space has Hausdorff Hilbert manifold structure. This work generalises some known results about simple semi-connections for complex vector bundle on compact complex manifold in real algebraic geometry.
\end{abstract}
\maketitle
\section{Introduction}\label{intro}
Gauge theoretic moduli spaces are useful in computing differential geometric invariants like Seiberg-Witten invariants, Donaldson invariant and many more. Gauge theoretic analytic tools are useful in the theory of four-dimensional manifold as well as Yang-Mills theory, etc.
 
In this article, we have studied Lie algebroid connections with real as well as quaternionic structures. The goal of the article is to give description of moduli of real as well as quaternionic Lie algebroid connections for complex vector bundle on compact complex manifold and describe the natural structure of locally Hausdorff Hilbert manifold with Symplectic structure on it.

In \cite{MFA}, Atiyah has described real (respectively, quaternionic) vector bundles as a complex vector bundle on a space with involution, say $(X,\inv{X})$ along with the bundle has a $\C$-antilinear automorphism $\rst{E}$ over the involution $\inv{X}$ such that $\rst{E}\circ \rst{E}\equiv \mathrm{id}_E$ (respectively, $\rst{E}\circ \rst{E}\equiv -\mathrm{id}_E$).

In section \ref{sec-3}, we have studied the real structure on the space of $\la{L}$-connections and proved that two $\la{L}$-connection lie in same gauge orbit iff they lie in same real gauge orbit. 

In section \ref{sec-4}, we have described the moduli of real (respectively, quaternionic) Sobolev irreducible Lie algebroid connections for complex vector bundle on complex manifold with real (respectively, quaternionic) structure and described the natural Hausdorff Hilbert manifold structure on it. Along with, we have proved $p:\widehat{A}(E,\la{L})_l^{\inv{}}\rightarrow \widehat{B}(E,\la{L})^{\inv{}}_l$ is a principal $\mathrm{Gau}(E)^{\inv{}}_{l+1}$-bundle, where $\widehat{A}(E,\la{L})_l^{\inv{}}$ is the space of irreducible real (respectively, quaternionic) Sobolev $\la{L}$-connections and $\widehat{B}(E,\la{L})^{\inv{}}_l$ is gauge equivalence classes. This is generalization of results in \cite{MLCO} and \cite{SK}.

\section{Preliminaries}\label{sec-2}
\begin{definition}\rm{
A complex (respectively, real) Lie algebroid of rank $r$ is a triple $(\la{L},[.,.],\sharp)$ of a complex (respectively, real) vector bundle of rank $r$, $\la{L}$ over a connected complex manifold $X$, a vector space morphism $[.,.]:\Gamma(X,\la{L})\times \Gamma(X,\la{L})\rightarrow \Gamma(X,\la{L})$ also known as Lie bracket structure on the space of global sections and a vector bundle morphism $\sharp:\la{L}\rightarrow TX_\C$ (respectively, $\sharp:\la{L}\rightarrow TX$), known as the anchor map such that the diagram commutes,

\[\begin{tikzcd}
	{\mathcal{L}} & TX && {\mathcal{L}} & {TX_\mathbb{C}} \\
	X & X && X & X
	\arrow["\sharp", from=1-1, to=1-2]
	\arrow["{\pi_\mathcal{L}}"', from=1-1, to=2-1]
	\arrow["{\pi_{TX}}", from=1-2, to=2-2]
	\arrow["\sharp", from=1-4, to=1-5]
	\arrow["{\pi_\mathcal{L}}"', from=1-4, to=2-4]
	\arrow["{\pi_{TX_\mathbb{C}}}", from=1-5, to=2-5]
	\arrow["\mathrm{id}_X"', from=2-1, to=2-2]
	\arrow["\mathrm{id}_X"', from=2-4, to=2-5]
\end{tikzcd}\]

the achor map satisfies the following conditions:
\begin{enumerate}
\item $\sharp\big([s_1,s_2]\big)=[\sharp(s_1),\sharp(s_2)]$
\item $[s_1,f.s_2]=f[s_1,s_2]+\big(\sharp(s_1)(f)\big)s_2$.
\end{enumerate}
}
\end{definition}

\begin{example}\rm{ 
Some examples of Lie algebroids are following:
\begin{enumerate}
\item For a given manifold, the tangent bundle with canonical Lie bracket structure on space of global sections, in this example the anchor map is the identity map.
\item A Lie algebra is a Lie algebroid over a point manifold with anchor map, a trivial map.
\item A subbundle of the tangent bundle of a given manifold, closed under Lie bracket structure, is a Lie algebroid.
\item Any bundle with fiber as a Lie algebra with anchor map, a trivial map is a Lie algebroid.
\end{enumerate}
}
\end{example}
\subsection{Lie algebroid connections}
For a given complex Lie algebroid $(\la{L},[.,.],\sharp)$ of rank $r$, the vector space 
$$
\Gamma(X,\Lambda^\bullet \la{L}^\star)=\bigoplus_{k}\Gamma(X,\Lambda^k\la{L}^\star)
$$
has a graded algebra structure induced by the exterior product map such that
for $\phi\in \Gamma(X,\Lambda^k\la{L}^\star)$ and $\psi\in \Gamma(X,\Lambda^l\la{L}^\star)$, $\phi\wedge \psi\in \Gamma(X,\Lambda^{k+l}\la{L}^\star)$ is given by 
$$
(\phi\wedge \psi)(\xi_1,\xi_2,\dots,\xi_{l+k})=\frac{1}{k!}\frac{1}{l!}\displaystyle\sum_{\sigma\in S_{l+k}}\phi\big(\xi_{\sigma(1)},\xi_{\sigma(2)},\dots,\xi_{\sigma(l)}\big).\psi\big(\xi_{\sigma(l+1)},\xi_{\sigma(l+2)},\dots,\xi_{\sigma(l+k)}\big).
$$


For $\phi\in \Gamma(X,\Lambda^k \la{L}^\star)$ and $\xi_l\in \Gamma(X,\la{L})$ $(0\leq l\leq k)$, define an operator $d_\la{L}:\Gamma(X,\Lambda^k \la{L}^\star)\rightarrow \Gamma(X,\Lambda^{k+1} \la{L}^\star)$ such that
\begin{align*}
d_\la{L}(\phi)(\xi_0,\dots,\xi_k)&=\displaystyle\sum_{n=0}^{k} (-1)^n \sharp(\xi_n)\phi\big(\xi_0,\xi_1,\dots,\widehat{\xi_n},\dots,\xi_k\big)\\
&\qquad+\displaystyle\sum_{l<m}(-1)^{l+m}\phi\big([\xi_l,\xi_m],\xi_0,\dots,\widehat{\xi_l},\dots,\widehat{\xi_m},\dots,\xi_n\big).
\end{align*}

The operator $d_\la{L}$ can be seen as a generalization of the de-Rham operator $d$ satisfying the property $d_\la{L}\circ d_\la{L}\equiv 0$. For a given element $\xi\in \Gamma(X,\la{L})$ the Lie derivative operator $\mathfrak{L}_\xi^\la{L}:\Gamma(X,\Lambda^k \la{L}^\star)\rightarrow \Gamma(X,\Lambda^k \la{L}^\star)$ is defined as 
$$
\mathfrak{L}_\xi^\la{L}(\phi)(\xi_1,\xi_2\dots,\xi_k)=\sharp(\xi)\phi(\xi_1,\xi_2,\dots,\xi_k)-\displaystyle\sum_{j=1}^n\phi(\xi_1,\xi_2,\dots,[\xi,\xi_j],\dots,\xi_k)
$$
and the degree -1, insertion operator $i^\la{L}_\xi:\Gamma(X,\Lambda^{k+1} \la{L}^\star)\rightarrow \Gamma(X,\Lambda^k \la{L}^\star)$ is defined as
$$
i^\la{L}_\xi(\phi)(\xi_1,\xi_2,\dots,\xi_k)=\phi(\xi,\xi_1,\xi_2,\dots,\xi_k).
$$
\begin{remark}\rm{
For simplicity, the space of global sections of the bundle $\Lambda^k\la{L}^\star$ will be denoted by $\mathcal{A}^k_{\la{L}}(X)$ and the graded algebra $\Gamma(X,\Lambda^\bullet \la{L}^\star)$ will be denoted by $\mathcal{A}^\bullet_{\la{L}}(X)$.
}
\end{remark}

Let $(\la{L},[.,.],\sharp)$ (respectively, $E$) be a complex Lie algebroid (respectively, complex vector bundel) on a connected complex manifold $X$. The space of global sections of bundle $\Lambda^kL^\star \otimes E$ will be denoted by $\mathcal{A}_{\la{L}}^k(X,E)$, whose elements are called global $\la{L}$-forms of type $k$ taking values in the bundle $E$. 
\begin{definition}\rm{
A $\C$-linear map 
$$
\nabla:\mathcal{A}^0_{\la{L}}(X,E)\rightarrow \mathcal{A}^1_{\la{L}}(X,E)
$$
satisfying the following Leibnitz type identity
$$
\nabla(fs)=d_\la{L}(f)s+f\nabla(s)
$$
for $f\in C^\infty(X)$ and $s\in \mathcal{A}^0_{\la{L}}(X,E)$, is called an $\la{L}$-connection.
}
\end{definition}
\begin{definition}\rm{
For a given element $\xi\in \Gamma(X,\la{L})$, we have an $\C$-linear map
$$
\nabla_\xi:\mathcal{A}^0_{\la{L}}(X,E)\rightarrow \mathcal{A}^0_{\la{L}}(X,E)\quad\Big(\nabla_\xi(fs)=\mathfrak{L}_\xi^\la{L}(f) s+f \nabla(s)(\xi)\text{ for }f\in C^\infty(X),s\in \mathcal{A}^0_\la{L}(X,E)\Big)
$$
The element $\nabla_\xi(s)\in \mathcal{A}_{\la{L}}^1(X,E)$ is called co-variant $\la{L}$-derivative of $s\in \mathcal{A}^0_{\la{L}}(X,E)$ and the operator $\nabla_\xi$ is called co-variant $\la{L}$-differential operator, in the direction of $\xi\in \Gamma(X,\la{L})$.
}
\end{definition}
\begin{theorem}
For a given $\la{L}$-connection $\nabla$, there is a degree 1 differential operator $d^\nabla:\mathcal{A}^\bullet_\la{L}(X,E)\rightarrow \mathcal{A}^\bullet_\la{L}(X,E)$ such that $d^\nabla|_{\mathcal{A}^0_\la{L}(X,E)}\equiv \nabla$, and
\begin{enumerate}
\item $d^\nabla(\alpha\wedge \phi)=d_\la{L}(\alpha)\wedge \phi+(-1)^l\alpha\wedge d^\nabla(\phi)$
\item $d^\nabla|_{\mathcal{A}^0_\la{L}(X,E)}\equiv \nabla$ and $(d^\nabla(s))(\psi)=\nabla_\psi(s)$
\end{enumerate}
for $\alpha\in \mathcal{A}^l_\la{L}(X),\phi\in \mathcal{A}^\bullet_\la{L}(X,E),s\in \mathcal{A}^0_\la{L}(X,E)$ and $\psi\in \Gamma(X,\la{L})$. 
\end{theorem}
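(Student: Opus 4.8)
The plan is to construct $d^\nabla$ by the usual Leibniz-extension argument, mimicking the definition of $d_\la{L}$ itself but twisting by $\nabla$ on the bundle factor. First I would fix a local frame $\{e_\alpha\}$ of $E$ over an open set $U\subseteq X$, so that any $\phi\in\mathcal{A}^k_\la{L}(U,E)$ can be written uniquely as $\phi=\sum_\alpha \phi^\alpha\otimes e_\alpha$ with $\phi^\alpha\in\mathcal{A}^k_\la{L}(U)$, and define
\[
d^\nabla(\phi)=\sum_\alpha\Big(d_\la{L}(\phi^\alpha)\otimes e_\alpha+(-1)^k\,\phi^\alpha\wedge\nabla(e_\alpha)\Big),
\]
where $\nabla(e_\alpha)\in\mathcal{A}^1_\la{L}(U,E)$ and the wedge is the obvious pairing $\mathcal{A}^k_\la{L}\times\mathcal{A}^1_\la{L}(X,E)\to\mathcal{A}^{k+1}_\la{L}(X,E)$ using the graded algebra structure on $\mathcal{A}^\bullet_\la{L}(X)$. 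Equivalently, and more invariantly, I would give the Koszul-type formula
\begin{align*}
(d^\nabla\phi)(\xi_0,\dots,\xi_k)&=\sum_{n=0}^{k}(-1)^n\nabla_{\xi_n}\big(\phi(\xi_0,\dots,\widehat{\xi_n},\dots,\xi_k)\big)\\
&\qquad+\sum_{l<m}(-1)^{l+m}\phi\big([\xi_l,\xi_m],\xi_0,\dots,\widehat{\xi_l},\dots,\widehat{\xi_m},\dots,\xi_k\big),
\end{align*}
which manifestly restricts to $\nabla$ on $\mathcal{A}^0_\la{L}(X,E)$ and gives property (2), since $(d^\nabla s)(\psi)=\nabla_\psi(s)$ is exactly the $k=0$ case.

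The substantive checks are: (i) well-definedness, i.e. the Koszul formula is $C^\infty(X)$-multilinear and alternating in the $\xi_i$, so it really defines a section of $\Lambda^{k+1}\la{L}^\star\otimes E$; (ii) independence of the chosen local frame, which then lets the local pieces glue to a global operator; and (iii) the Leibniz rule (1). For (i), the alternating property is immediate from the signs, and the tensoriality in each slot is the standard cancellation computation: when one replaces $\xi_n$ by $f\xi_n$, the Leibniz defect $\mathfrak{L}^\la{L}_{\xi_n}(f)$-term produced by $\nabla_{f\xi_n}$ in the first sum is cancelled by the $\sharp$-derivative terms coming from the brackets $[f\xi_n,\xi_m]=f[\xi_n,\xi_m]-\big(\sharp(\xi_m)(f)\big)\xi_n$ in the second sum — this is the same bookkeeping that makes $d_\la{L}$ well-defined, now carrying an $E$-valued tail. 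For (ii), I would note that two local expressions agree because the Koszul formula makes no reference to the frame; alternatively, from the frame formula, a change of frame $e_\alpha=\sum_\beta g^\beta_\alpha e'_\beta$ changes $\nabla(e_\alpha)$ and $\phi^\alpha$ by compensating $d_\la{L}g$-terms that cancel by the graded Leibniz rule for $d_\la{L}$. For (iii), writing $\alpha\wedge\phi$ in a local frame reduces it to the graded Leibniz identity for $d_\la{L}$ on $\mathcal{A}^\bullet_\la{L}(X)$ (already recorded above) plus the definition of $d^\nabla$; the sign $(-1)^l$ appears exactly as in the classical case.

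I expect the main obstacle to be the tensoriality verification in (i): one must track the interaction of the anchor-derivative terms $\sharp(\xi_n)(\cdot)$ hidden inside $\nabla_{\xi_n}$ with the bracket-rescaling terms $\sharp(\xi_m)(f)$, and confirm that the Lie-algebroid identities $\sharp([s_1,s_2])=[\sharp(s_1),\sharp(s_2)]$ and $[s_1,fs_2]=f[s_1,s_2]+(\sharp(s_1)f)s_2$ force the total defect to vanish. Everything else is formal: uniqueness of $d^\nabla$ follows because properties (1) and (2) determine it on $\mathcal{A}^0_\la{L}(X,E)$ and then, via (1) with $\phi\in\mathcal{A}^0$ and $\alpha$ ranging over a local generating set of $\mathcal{A}^\bullet_\la{L}(X)$ over $C^\infty(X)$, on all of $\mathcal{A}^\bullet_\la{L}(X,E)$.
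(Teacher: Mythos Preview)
Your proposal is correct and is precisely the standard extension argument the paper is alluding to: the paper's own proof consists of a single sentence referring the reader to the classical affine-connection case in Kobayashi's book, and your Koszul-formula/local-frame construction is exactly that argument transported to the Lie algebroid setting. In fact you have supplied considerably more detail than the paper does, including the tensoriality and frame-independence checks that the paper leaves entirely implicit.
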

\begin{proof}
The proof follows in the same line of arguements as in the case of affine connections modeled on the vector space $\Gamma(X,T^\star X\otimes\mathrm{End}(E))$ (see, \cite{SK}).
\end{proof}
\begin{lemma}\label{liealgconn}
The space of $\la{L}$-connections $A(E,\la{L})$, is an affine space modeled on the vector space $\mathcal{A}^1_\la{L}(X,\textrm{End}(E))$.
\end{lemma}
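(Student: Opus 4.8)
The plan is to verify the two defining properties of an affine space modeled on a vector space: first, that the difference of two $\la{L}$-connections lies in $\mathcal{A}^1_\la{L}(X,\mathrm{End}(E))$, and second, that adding an element of $\mathcal{A}^1_\la{L}(X,\mathrm{End}(E))$ to an $\la{L}$-connection produces another $\la{L}$-connection, with these operations being compatible (so that $A(E,\la{L})$ becomes a torsor under the vector space). I would first observe that $A(E,\la{L})$ is nonempty — locally, on a trivializing open cover $\{U_i\}$ one has the trivial $\la{L}$-connection $d_\la{L}$ acting componentwise, and one patches these together using a partition of unity on the compact manifold $X$, checking that a convex combination of $\la{L}$-connections is again an $\la{L}$-connection (the Leibnitz correction terms $d_\la{L}(f)s$ combine with coefficient summing to $1$).

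Next, given two $\la{L}$-connections $\nabla_1,\nabla_2$, I would set $a := \nabla_1 - \nabla_2$ and compute, for $f\in C^\infty(X)$ and $s\in \mathcal{A}^0_\la{L}(X,E)$,
\[
a(fs) = \nabla_1(fs) - \nabla_2(fs) = \big(d_\la{L}(f)s + f\nabla_1(s)\big) - \big(d_\la{L}(f)s + f\nabla_2(s)\big) = f\,a(s),
\]
so the $d_\la{L}(f)$ terms cancel and $a$ is $C^\infty(X)$-linear. A $C^\infty(X)$-linear map $\mathcal{A}^0_\la{L}(X,E)\to\mathcal{A}^1_\la{L}(X,E) = \Gamma(X,\la{L}^\star\otimes E)$ is exactly a global section of $\mathcal{H}om(E,\la{L}^\star\otimes E) = \la{L}^\star\otimes\mathrm{End}(E)$, i.e. an element of $\mathcal{A}^1_\la{L}(X,\mathrm{End}(E))$; this is the standard tensoriality argument, and I would invoke it (the bundle map is determined pointwise because a $C^\infty$-linear operator between section spaces of vector bundles is induced by a bundle homomorphism). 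Conversely, for $\nabla\in A(E,\la{L})$ and $a\in\mathcal{A}^1_\la{L}(X,\mathrm{End}(E))$, the map $\nabla + a$ is $\C$-linear and satisfies $(\nabla+a)(fs) = d_\la{L}(f)s + f\nabla(s) + f\,a(s) = d_\la{L}(f)s + f(\nabla+a)(s)$, so it is again an $\la{L}$-connection. Finally I would note that $(\nabla_2 - \nabla_1) + (\nabla_3 - \nabla_2) = \nabla_3 - \nabla_1$ and that the action is free and transitive, which is precisely the assertion that $A(E,\la{L})$ is an affine space modeled on $\mathcal{A}^1_\la{L}(X,\mathrm{End}(E))$.

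The only genuinely non-formal point is the identification of a $C^\infty(X)$-linear map $\mathcal{A}^0_\la{L}(X,E)\to\mathcal{A}^1_\la{L}(X,E)$ with a global section of $\la{L}^\star\otimes\mathrm{End}(E)$ — i.e. that such an operator is tensorial hence pointwise-defined — but this is classical (it follows from a bump-function/locality argument exactly as for ordinary connections, cf. \cite{SK}), so I expect no real obstacle; the bulk of the proof is the short cancellation computations above. I would therefore present the argument briefly, emphasizing the cancellation of the $d_\la{L}(f)$ terms in the difference and the Leibnitz-stability under adding an $\mathrm{End}(E)$-valued $\la{L}$-form, and citing the tensoriality fact rather than reproving it.
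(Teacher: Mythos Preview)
Your proposal is correct and follows essentially the same approach as the paper: both establish nonemptiness via a partition-of-unity construction of a connection from local trivial $\la{L}$-connections, and both prove the affine structure by the same Leibniz-cancellation computation showing $\nabla_1-\nabla_2$ is $C^\infty(X)$-linear. If anything, your write-up is slightly more complete in that you explicitly verify the converse direction and the torsor axioms, whereas the paper leaves these implicit in the remark that follows.
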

\begin{proof}
Let $\{U_\alpha,\phi_\alpha\}_{\alpha\in I}$ be a trivialization of the bundle $E$ along with locally finite open covering $\{U_\alpha\}_{\alpha\in I}$ of $X$ and $\{g_\alpha\}_{\alpha\in I}$ be the smooth partition of unity subordinate to the open covering. Take $s\in \mathcal{A}^0_\la{L}(X,E)$, then $\mathrm{Supp}(g_\alpha s)\subset U_\alpha$ and $\phi_\alpha(g_\alpha s|_{U_\alpha})\in \Gamma(U_\alpha,U_\alpha\times \C^r)$ along with $\mathrm{Supp}(\phi_\alpha(g_\alpha s|_{U_\alpha}))\subset U_\alpha$ for each $\alpha\in I$, therefore $\phi_\alpha(g_\alpha s|_{U_\alpha})$ can be extended to a global section, say $\phi_\alpha(g_\alpha s|_{U_\alpha})\subset \Gamma(X,X \times \C^r)$.

Let $\nabla_0$ be the trivial $\la{L}$-connection for the trivial bundle on $X$ that is $\nabla_0(f\times s)=d_\la{L}(f)\times s$ for $f\in C^\infty(X)$ and $s\in \Gamma(X,X\times \C^r)$, we have $\mathrm{Supp}(\nabla_0(\phi_\alpha(g_\alpha s|_{U_\alpha}))\subset U_\alpha$ for each $\alpha\in U_\alpha$, furthermore 
$$
\left[\mathrm{id}_{{\la{L}^\star}|_{U_\alpha}}\otimes \phi^{-1}_\alpha\circ \nabla_0(\phi_\alpha(g_\alpha s|_{U_\alpha}))\right]\in \mathcal{A}^1_\la{L}(U_\alpha,E)
$$
Define
$$
\nabla(s)(x)=\displaystyle \sum_\alpha \mathrm{id}_{{\la{L}^\star}|_{U_\alpha}}\otimes \phi^{-1}_\alpha\circ \nabla_0(\phi_\alpha(g_\alpha s|_{U_\alpha}))(x)
$$
which will be finite sum for a given $x\in X$, because of local finiteness of the covering.

The map $\nabla:\mathcal{A}^0_\la{L}(X,E)\rightarrow \mathcal{A}^1_\la{L}(X,E)$ is an $\la{L}$-connection, because
\begin{align*}
\nabla(fs)(x)&=\displaystyle \sum_\alpha \mathrm{id}_{{\la{L}^\star}|_{U_\alpha}}\otimes \phi^{-1}_\alpha\circ \nabla_0(\phi_\alpha(g_\alpha (fs)|_{U_\alpha}))(x)\\
&=\displaystyle \sum_\alpha \mathrm{id}_{{\la{L}^\star}|_{U_\alpha}}\otimes \phi^{-1}_\alpha\circ \nabla_0(f|_{U_\alpha}\phi_\alpha\circ (g_\alpha \circ s)|_{U_\alpha})(x)\\
&=\displaystyle \sum_\alpha \mathrm{id}_{{\la{L}^\star}|_{U_\alpha}}\otimes \phi^{-1}_\alpha\circ \big[d_\la{L}(f|_{U_\alpha})(x)\otimes\phi_\alpha\circ(g_\alpha \circ s)|_{U_\alpha}(x)\\
&\qquad\qquad+f|_{U_\alpha}(x)\otimes \nabla_0(\phi_\alpha\circ(g_\alpha \circ s|_{U_\alpha}))(x)\big]\\
&=d_\la{L}(f)(x)\otimes\displaystyle \sum_\alpha (g_\alpha \circ s)|_{U_\alpha}(x)\\
&\qquad\qquad+f(x)\otimes \displaystyle \sum_\alpha \mathrm{id}_{{\la{L}^\star}|_{U_\alpha}}\otimes \phi^{-1}_\alpha\circ\nabla_0(\phi_\alpha\circ(g_\alpha \circ s|_{U_\alpha}))(x)\big]\\
&=d_\la{L}(f)(x)\otimes s(x)+f(x)\otimes \nabla(s)(x)
\end{align*}
Only thing, we need to check is, for two $\la{L}$-connections $\nabla_i$ $(i=1,2)$, $(\nabla_1-\nabla_2)\in \mathcal{A}^1_\la{L}(X,\mathrm{End}(E))$, which is obvious because 
\begin{align*}
(\nabla_1-\nabla_2)(fs)&=(d_\la{L}(f)\otimes s+f\nabla_1(s))-(d_\la{L}(f)\otimes s+f\nabla_2(s))\\
&=f(\nabla_1-\nabla_2)(s)
\end{align*}
That is $\nabla_1-\nabla_2\in \mathcal{A}^1_\la{L}(X,\mathrm{End}(E))$. Hence, we have the following remark.
\end{proof}
\begin{remark}\rm{
Let $\nabla_0$ be the trivial $\la{L}$-connection, any Lie algebroid connection $\nabla\in A(E,\la{L})$ can be expressed as $\nabla=\nabla_0+\alpha$, where $\alpha\in \mathcal{A}^1_\la{L}(X,\mathrm{End}(E))$. Therefore, the space of $\la{L}$-connections can be expressed as,
\begin{equation}
A(E,\la{L})=\{\nabla_0+\alpha\mid \alpha\in \mathcal{A}^1_\la{L}(X,\mathrm{End}(E))\}
\end{equation}
}
\end{remark}
Like an affine connection, an $\la{L}$-connection $\nabla^E$ for a bundle $E$ induces an $\la{L}$-connection $\nabla^{\mathrm{End}(E)}$ for the bundle $\mathrm{End}(E)$ such that 
\begin{equation}
\nabla^E(Ts)=(\nabla^{\mathrm{End}(E)}T)(S)+T(\nabla^E(S))
\end{equation}
for $T\in \Gamma(X,\mathrm{End}(E))$ and $s\in \Gamma(X,E)$.

The graded $\C$-vector space $\mathcal{A}^\bullet_\la{L}(X,\mathrm{End}(E))$ has a canonical structure of associate algebra such that for $\omega\in \mathcal{A}^p_\la{L}(X,\mathrm{End}(E))$ and $\tau\in \mathcal{A}^q_\la{L}(X,\mathrm{End}(E))$, $\omega\wedge \tau$ is given by 
$$
\omega\wedge\tau(\xi_1,\xi_2,\dots,\xi_{p+q})=\frac{1}{p!q!}\displaystyle\sum_{\sigma\in S(p+q)} Sign(\sigma)\omega(\xi_{\sigma(1)},\dots,\xi_{\sigma(p)})\tau(\xi_{\sigma(p+1},\dots,\xi_{\sigma(p+q)})
$$
as well as a Lie algebra structure with the Lie bracket $[\omega,\tau]\in \mathcal{A}^{p+q}_\la{L}(X,\mathrm{End}(E))$ can be described as,
\begin{equation}\label{liealgebra}
[\omega,\tau](\xi_1,\dots,\xi_{p+q})=\frac{1}{p! q!}\displaystyle\sum_{\sigma\in S(p+q)} Sign(\sigma)[\omega(\xi_{\sigma(1)},\dots,\xi_{\sigma(p)}),\tau(\xi_{\sigma(p+1)},\dots,\xi_{\sigma_{(p+q)}})]
\end{equation}
for $\xi_1,\dots,\xi_{p+q}\in \mathcal{A}^0_\la{L}(X)$.

\subsection{Gauge orbits of Lie algebroid connections}
Consider a complex vector bundle $(E,\pi,X)$  on a complex manifold $X$, a vector bundle endomorphism is a pair $(\phi,\phi')$ such that the diagram 
 \[\begin{tikzcd}
	E & E \\
	X & X
	\arrow["\phi", from=1-1, to=1-2]
	\arrow["\pi"', from=1-1, to=2-1]
	\arrow["\pi", from=1-2, to=2-2]
	\arrow["{\phi'}"', from=2-1, to=2-2]
\end{tikzcd}\]
commutes. The endomorphism $(\phi,\phi')$ is also called a vector bundle morphism $\phi:E\rightarrow E$ over the map $\phi':X\rightarrow X$. Composition of vector bundle endomorphism $(\phi,\phi')$ and $(\psi,\psi')$ is the endomorphism $(\phi\circ \psi,\phi'\circ\psi')$, an invertible vector bundle endomorphism is called vector bundle automorphism. 
\begin{definition}\rm{
The collection of all vector bundle automorphism over the map $\textrm{id}_X:X\rightarrow X$ has the group structure with operation as composition, is called the Gauge group. The elements of the gauge group are called gauge transformations.
}
\end{definition}
The space of Gauge transformations for a vector bundle $E$ will be denoted by $\mathrm{Gau}(E)$ and a gauge transformation $(\phi,\mathrm{id}|_X)$ will be expressed by $\phi$ only.

For a given $\la{L}$-connection $\nabla\in A(E,\la{L})$ and a gauge transformation $\phi\in \mathrm{Gau}(E)$, Define the map $\nabla^\phi:\mathcal{A}^0_\la{L}(X,E)\rightarrow \mathcal{A}^1_\la{L}(X,E)$ given by
$$
\nabla^\phi(s)= (id_{\la{L}^\star}\otimes \phi^{-1})\circ \nabla (\phi(s))\quad(\text{for }s\in \mathcal{A}^0_\la{L}(X,E))
$$
\begin{lemma}
The map $\nabla^\phi:\mathcal{A}^0_\la{L}(X,E)\rightarrow \mathcal{A}^1_\la{L}(X,E)$ is a smooth Lie algebroid connection.
\end{lemma}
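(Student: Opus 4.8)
The plan is to verify that $\nabla^\phi$ is $\C$-linear and satisfies the Leibniz identity $\nabla^\phi(fs) = d_\la{L}(f)s + f\nabla^\phi(s)$, since by definition an $\la{L}$-connection is precisely such a map; smoothness is then inherited from $\nabla$ and $\phi$. First I would observe that $\C$-linearity is immediate: $\phi$ and $\phi^{-1}$ are vector bundle automorphisms (hence fibrewise $\C$-linear), $\mathrm{id}_{\la{L}^\star}\otimes\phi^{-1}$ is $\C$-linear, and $\nabla$ is $\C$-linear, so the composite $\nabla^\phi = (\mathrm{id}_{\la{L}^\star}\otimes\phi^{-1})\circ\nabla\circ\phi$ is $\C$-linear.

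Next I would compute the Leibniz term. For $f\in C^\infty(X)$ and $s\in\mathcal{A}^0_\la{L}(X,E)$, since $\phi$ is $C^\infty(X)$-linear on sections we have $\phi(fs) = f\phi(s)$, so
\begin{align*}
\nabla^\phi(fs) &= (\mathrm{id}_{\la{L}^\star}\otimes\phi^{-1})\circ\nabla(f\,\phi(s))\\
&= (\mathrm{id}_{\la{L}^\star}\otimes\phi^{-1})\big(d_\la{L}(f)\,\phi(s) + f\,\nabla(\phi(s))\big)\\
&= d_\la{L}(f)\,(\mathrm{id}_{\la{L}^\star}\otimes\phi^{-1})(\phi(s)) + f\,(\mathrm{id}_{\la{L}^\star}\otimes\phi^{-1})\nabla(\phi(s))\\
&= d_\la{L}(f)\,s + f\,\nabla^\phi(s),
\end{align*}
where the last step uses that $(\mathrm{id}_{\la{L}^\star}\otimes\phi^{-1})$ commutes with multiplication by the scalar function $f$ and by the $\la{L}^\star$-valued form $d_\la{L}(f)$, acting only on the $E$-factor, and that $\phi^{-1}\circ\phi = \mathrm{id}_E$. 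Here I am using that for $\alpha\otimes t \in \mathcal{A}^1_\la{L}(X)\otimes\Gamma(X,E)$ the operator $\mathrm{id}_{\la{L}^\star}\otimes\phi^{-1}$ sends it to $\alpha\otimes\phi^{-1}(t)$, so it is $\mathcal{A}^\bullet_\la{L}(X)$-linear; this is what lets $d_\la{L}(f)$ pass through untouched.

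Finally, for smoothness I would note that $\phi, \phi^{-1}$ are smooth bundle maps, $\nabla$ is a smooth differential operator, and composition preserves smoothness, so $\nabla^\phi$ is again a smooth $\la{L}$-connection, placing it in $A(E,\la{L})$. The main (and only genuine) point to be careful about is the bookkeeping in the third displayed line: one must justify that tensoring the fibrewise map $\phi^{-1}$ with $\mathrm{id}_{\la{L}^\star}$ really does leave the $d_\la{L}(f)$ factor alone and interacts with the Leibniz expansion of $\nabla(f\phi(s))$ exactly as written — i.e. that $\mathrm{id}_{\la{L}^\star}\otimes\phi^{-1}$ is a morphism of $\mathcal{A}^\bullet_\la{L}(X)$-modules. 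Once that is made precise the identity drops out, so there is no substantial obstacle; this lemma is essentially a formal verification analogous to the classical case of affine connections.
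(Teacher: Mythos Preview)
Your proof is correct and follows essentially the same approach as the paper: both verify the Leibniz identity by the direct computation $\nabla^\phi(fs)=(\mathrm{id}_{\la{L}^\star}\otimes\phi^{-1})\nabla(f\phi(s))=d_\la{L}(f)\,s+f\nabla^\phi(s)$. Your version is slightly more expansive in spelling out $\C$-linearity, smoothness, and the $\mathcal{A}^\bullet_\la{L}(X)$-module linearity of $\mathrm{id}_{\la{L}^\star}\otimes\phi^{-1}$, but the substance is identical.
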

\begin{proof}
For $f\in \mathcal{A}^0_\la{L}(X)$ and $s\in \mathcal{A}^0_\la{L}(X,E)$, we have
\begin{align*}
\nabla^\phi(fs)=(id_{\la{L}^\star}\otimes \phi^{-1})\circ \nabla (\phi(fs))&=(id_{\la{L}^\star}\otimes \phi^{-1})\Big(d_\la{L}(f)\otimes \phi(s)+f\nabla\big(\phi(s)\big)\Big)\\
&=d_\la{L}(f)\otimes s+f(id_{\la{L}^\star}\otimes \phi^{-1})\circ\nabla\circ\phi(s)\\
&=d_\la{L}(f)\otimes s+f\nabla^\phi(s)
\end{align*}
This implies $\nabla^\phi\in A(E,\la{L})$, for given $\nabla\in A(E,\la{L})$ and $\phi\in \mathrm{Gau}(E)$.
\end{proof}
Hence, there is a canonical left $\mathrm{Gau}(E)$ action on the space $A(E,\la{L})$, of $\la{L}$-connections, given as
\begin{equation}\label{gaugeaction1}
\odot:\mathrm{Gau}(E)\times A(E,\la{L})\quad\big(\phi,\nabla)\mapsto \phi\odot\nabla\equiv \nabla^\phi\big).
\end{equation}
Also, for a given element $\xi\in \Gamma(X,\la{L})$,
\begin{equation}\label{gaugeaction2}
(\phi,\nabla_\xi)\mapsto \phi\odot\nabla_\xi\equiv \nabla_\xi^\phi=\phi^{-1}\circ \nabla_\xi\circ \phi
\end{equation}
Since $A(E,\la{L})$ is $\mathrm{Gau}(E)$-invariant, we can talk about the quotient space $\bigslant{A(E,\la{L})}{\mathrm{Gau}(E)}$.

\begin{definition}\rm{
For a given $\la{L}$-connection $\nabla$, the subgroup 
$$
\mathrm{Gau}(E)_\nabla=\{\phi\in \mathrm{Gau}(E)\mid \phi\odot\nabla\equiv \nabla\}
$$
is called the isotropic subgroup or stabiliser for $\nabla$. Note that the sugroup $\C^\star.\mathrm{id}_E\subset \mathrm{Gau}(E)$ is contained in $\mathrm{Gau}(E)_\nabla$ for each $\nabla\in A(E,\la{L})$.
}
\end{definition}
\begin{definition}\rm{
An $\la{L}$-connection $\nabla$ is called irreducible if $\mathrm{Gau}(E)_\nabla\equiv \C^\star.\mathrm{id}_E$ and reducible if it is not an irreducible $\la{L}$-connection.
}
\end{definition}
The space of irreducible $\la{L}$-connections will be denoted by $\widehat{A}(E,\la{L})$.
\begin{remark}\label{gaugeinvariant}\rm{
For $\nabla\in A(E,\la{L})$ and $\phi\in \mathrm{Gau}(E)$, let $\psi\in \mathrm{Gau}(E)_{\nabla^\phi}$ then,
\begin{align*}
(id_{\la{L}^\star}\otimes \psi^{-1})\circ \nabla^\phi(\psi(s))&\equiv \nabla^\phi(s)\\
(id_{\la{L}^\star}\otimes \psi^{-1})\circ (id_{\la{L}^\star}\otimes \phi^{-1})\circ \nabla(\phi\circ \psi(s))&\equiv (id_{\la{L}^\star}\otimes \phi^{-1})\circ \nabla(\phi(s))\\
(id_{\la{L}^\star}\otimes \phi)\circ (id_{\la{L}^\star}\otimes \psi^{-1})\circ (id_{\la{L}^\star}\otimes \phi^{-1})\circ \nabla(\phi\circ \psi\circ \phi^{-1}(s))&\equiv \nabla(s)\\
\text{or, }\big(id_{\la{L}^\star}\otimes (\phi\circ \psi\circ \phi^{-1})^{-1}\big)\circ \nabla(\phi\circ \psi\circ \phi^{-1}(s))&\equiv \nabla(s)
\end{align*}
We have $\phi\circ \psi\circ \phi^{-1}\in\mathrm{Gau}(E)_\nabla$ or, $\mathrm{Gau}(E)_{\nabla^\phi}\subset \phi^{-1}. \mathrm{Gau}(E)_\nabla. \phi$. Following the steps in reverse direction, we get the reverse inclusion i.e. $\mathrm{Gau}(E)_{\nabla^\phi}\supset \phi^{-1}. \mathrm{Gau}(E)_\nabla. \phi$. Hence,
$$
\mathrm{Gau}(E)_{\nabla^\phi}=\{\phi\circ x \circ \phi^{-1}\mid x\in \mathrm{Gau}(E)_\nabla\}
$$ 
}
\end{remark}
From above remark, if $\nabla$ is an irreducible $\la{L}$-connection, then for any $\phi\in \mathrm{Gau}(E)$ we have $\mathrm{Gau}(E)_{\nabla^\phi}=\C^\star.\mathrm{id}_E$, means the space of irreducible smooth Lie algebroid connections, $\widehat{A}(E,\la{L})$ is a $\mathrm{Gau}(E)$-invariant subset of $A(E,\la{L})$. Hence, we can talk about the quotient space $\bigslant{\widehat{A}(E,\la{L})}{\mathrm{Gau}(E)}$.

\begin{remark}\label{gaugeconnection}\rm{
For a given $\la{L}$-connection $\nabla\in A(E,\la{L})$, a gauge transformation $\phi\in \mathrm{Gau}(E)$ and $s\in \mathcal{A}^0_\la{L}(X,E)$, we have
\begin{align*}
\nabla^\phi(s)&=\phi^{-1}\circ (\nabla(\phi(s))\\
&=\phi^{-1}\circ (\nabla^{\mathrm{End}(E)}\phi)(s)+\phi^{-1}\circ\Big(\phi\big(\nabla(s)\big)\Big)\\
&=\phi^{-1}\circ (\nabla^{\mathrm{End}(E)}\phi)(s)+\nabla(s)
\end{align*}
and if $\nabla=\nabla_0+\alpha$ for some $\alpha\in \mathcal{A}^1_\la{L}(X,\mathrm{End}(E))$ then,
\begin{align*}
\nabla^\phi(s)&=\phi^{-1}\circ (\nabla(\phi(s))\\
&=\phi^{-1}\circ \big((\nabla_0+\alpha)(\phi(s))\big)\\
&=\phi^{-1}\circ \big(\nabla_0(\phi(s))+\alpha\circ\phi(s)\big)\\
&=\nabla_0(s)+\phi^{-1}\circ (\nabla_0^{\mathrm{End}(E)}\phi)(s)+\phi^{-1}\circ \alpha\circ \phi(s)\\
\end{align*}
Hence, $\nabla^\phi$ can be expressed as,
$$
\nabla^\phi\equiv \nabla_0+\alpha^\phi
$$
where, $\alpha^\phi=\phi^{-1}\circ (\nabla_0^{\mathrm{End}(E)}\phi)+\phi^{-1}\circ \alpha\circ \phi$
}
\end{remark}
\begin{theorem}\label{kernel}
For a given $\la{L}$-connections $\nabla$, the following statements are equivalent,
\begin{enumerate}
\item $\mathrm{Gau}(E)_\nabla=\C^\star.\mathrm{id}_E$,
\item $\mathrm{ker}(\nabla^{\mathrm{End}(E)})=\C.\mathrm{id}_E$.
\end{enumerate}
\end{theorem}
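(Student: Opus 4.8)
The plan is to reduce both conditions to a single property of the $\C$-linear subspace $\ker(\nabla^{\mathrm{End}(E)})\subseteq\Gamma(X,\mathrm{End}(E))$. I would first record the reformulation of the stabiliser coming from Remark~\ref{gaugeconnection}: for $\phi\in\mathrm{Gau}(E)$ one has $\nabla^\phi=\nabla$ exactly when $\phi^{-1}\circ(\nabla^{\mathrm{End}(E)}\phi)=0$, and since $\phi$ is fibrewise invertible this is the same as $\nabla^{\mathrm{End}(E)}\phi=0$; hence
$$
\mathrm{Gau}(E)_\nabla=\mathrm{Gau}(E)\cap\ker(\nabla^{\mathrm{End}(E)}).
$$
It is also useful to note that $\nabla^{\mathrm{End}(E)}(\mathrm{id}_E)=0$ — put $T=\mathrm{id}_E$ in $\nabla^E(Ts)=(\nabla^{\mathrm{End}(E)}T)(s)+T(\nabla^E s)$ — so that $\C\,\mathrm{id}_E\subseteq\ker(\nabla^{\mathrm{End}(E)})$ and $\C^\star\mathrm{id}_E\subseteq\mathrm{Gau}(E)_\nabla$ for every $\nabla$. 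With this, $(2)\Rightarrow(1)$ is immediate: intersecting $\ker(\nabla^{\mathrm{End}(E)})=\C\,\mathrm{id}_E$ with $\mathrm{Gau}(E)$ leaves exactly the nonzero scalars.

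For $(1)\Rightarrow(2)$ I would use one-parameter groups of parallel gauge transformations. Assume $\mathrm{Gau}(E)_\nabla=\C^\star\mathrm{id}_E$ and take $T\in\ker(\nabla^{\mathrm{End}(E)})$; I want $T\in\C\,\mathrm{id}_E$. For each $t\in\R$, $\exp(tT)$ is a fibrewise-invertible section of $\mathrm{End}(E)$, hence an element of $\mathrm{Gau}(E)$, and it is again parallel: using the composition Leibniz rule $\nabla^{\mathrm{End}(E)}(ST)=(\nabla^{\mathrm{End}(E)}S)\circ T+S\circ(\nabla^{\mathrm{End}(E)}T)$ (which follows from the displayed identity for $\nabla^E$), the $\mathrm{End}(E)$-valued $1$-form $Y(t):=\nabla^{\mathrm{End}(E)}\exp(tT)$ solves $Y'(t)=T\circ Y(t)$ with $Y(0)=\nabla^{\mathrm{End}(E)}\mathrm{id}_E=0$, so $Y\equiv0$. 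Thus $\exp(tT)\in\mathrm{Gau}(E)\cap\ker(\nabla^{\mathrm{End}(E)})=\C^\star\mathrm{id}_E$, i.e.\ $\exp(tT)=c(t)\,\mathrm{id}_E$ for a smooth $c:\R\to\C^\star$ with $c(0)=1$; differentiating the pointwise identity $\exp(tT_x)=c(t)\,\mathrm{id}_{E_x}$ at $t=0$ yields $T_x=c'(0)\,\mathrm{id}_{E_x}$ for all $x$, and $c'(0)$ is independent of $x$, so $T=c'(0)\,\mathrm{id}_E$.

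The step I expect to be the main obstacle is verifying that $\exp(tT)$ remains in $\ker(\nabla^{\mathrm{End}(E)})$: one must first establish the composition Leibniz rule for $\nabla^{\mathrm{End}(E)}$ and then justify the ODE argument above (equivalently, differentiate the exponential series term by term), neither of which is deep but both of which deserve to be written out since $\nabla^{\mathrm{End}(E)}$ is a first-order operator. As an alternative valid when $X$ is compact — the standing hypothesis of the paper — one can avoid the exponential: fixing a Hermitian metric, $T$ has bounded fibrewise operator norm $M<\infty$, so for any $\lambda$ with $|\lambda|>M$ the section $T-\lambda\,\mathrm{id}_E$ is fibrewise invertible with smooth inverse (Neumann series), hence lies in $\mathrm{Gau}(E)_\nabla=\C^\star\mathrm{id}_E$, which again forces $T\in\C\,\mathrm{id}_E$. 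Everything else is a formal consequence of Remark~\ref{gaugeconnection} and Lemma~\ref{liealgconn}.
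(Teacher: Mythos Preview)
Your proposal is correct. Your identification $\mathrm{Gau}(E)_\nabla=\mathrm{Gau}(E)\cap\ker(\nabla^{\mathrm{End}(E)})$ via Remark~\ref{gaugeconnection} is exactly the paper's first ingredient, and your \emph{alternative} argument for $(1)\Rightarrow(2)$ --- that on a compact base any $T\in\ker(\nabla^{\mathrm{End}(E)})$ satisfies $T-\lambda\,\mathrm{id}_E\in\mathrm{Gau}(E)_\nabla$ for $|\lambda|$ large, forcing $T$ to be scalar --- is precisely the paper's second ingredient (the paper phrases it as choosing $c$ with $\mathrm{id}_E-\tfrac{\phi}{c}\in\mathrm{Gau}(E)$). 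Your \emph{primary} route through the one-parameter group $t\mapsto\exp(tT)$ is a genuinely different argument: it trades the compactness hypothesis for the composition Leibniz rule and a mild ODE/series justification, and it has the modest advantage of working without any boundedness assumption on $T$. The paper's Neumann-series argument is shorter and uses only that invertibility is an open fibrewise condition together with compactness of $X$; your exponential argument is slightly more robust but requires the extra verification you flagged. Either way, the content is the same two facts the paper records.
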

\begin{proof}
The proof follows using the following two facts:
\begin{enumerate}
\item For $\phi\in \mathrm{Gau}(E)$, $\mathrm{Gau}(E)_\nabla=\mathrm{ker}(\nabla^{\mathrm{End}(E)})$,
\item For a given vector bundle endomorphism $\phi$ for a complex vector bundle on a compact complex manifold, we can have $c\in \C$ with $|c|$ sufficiently large with $I-\frac{\phi}{c}\in \mathrm{Gau}(E)$.
\end{enumerate}
\end{proof}
\begin{definition}
The quotient space $\bigslant{\mathrm{Gau}(E)}{\C.Id_E}$ is called the reduced Gauge group.
\end{definition}
\begin{remark}\rm{
Since, $\C.Id\subset \mathrm{Gau}(E)$ acts trivially on each $\la{L}$-connection, the left $\mathrm{Gau}(E)$ action on $A(E,\la{L})$ factors through left $\mathrm{Gau}(E)^r$ action on $A(E,\la{L})$. Also, the isotropy group associated to each irreducible connection is $\C.Id_E$, the left $\mathrm{Gau}(E)^r$ action on the space of irreducible $\la{L}$-connection, $\widehat{A}(E,\la{L})$ is a free action. Hence, we have the moduli spaces
$$
B(E,\la{L})=\bigslant{A(E,\la{L})}{\mathrm{Gau}(E)^r}\text{ and }\widehat{B}(E,\la{L})=\bigslant{\widehat{A}(E,\la{L})}{\mathrm{Gau}(E)^r}
$$
which are studied in \cite{LK}.
}
\end{remark}
\section{Real (resp. quaternionic) $\la{L}$-connections and Sobolev spaces}\label{sec-3}
\begin{definition}\rm{
A toplogical complex manifold with involution is a pair $(X,\inv{X})$ of a complex manifold $X$ and an anti-holomorphic map $\inv{X}:X\rightarrow X$ such that $\inv{X}^2\equiv \textrm{id}_X$. 
}
\end{definition}
\begin{definition}\rm{
A complex vector bundle $(E,\pi,X)$ on a complex manifold with involution $(X,\inv{X})$ is said to have a real (respectively, quaternionic) structure, if there is a bundle automorphism $(\rst{E},\inv{X})$ such that 
\begin{enumerate}
\item on each fiber, the map $\rst{E}$ is $\C$-antilinear,
\item $\rst{E}\circ \rst{E}\equiv \textrm{id}_E$ (respectively, $-\textrm{id}_E$).
\end{enumerate}
A complex vector bundle with real (respectively, quaternionic) structure on a complex manifold with involution $(X,\inv{X})$, is called real(respectively, quaternionic) vector bundle, it will be denoted by $(E,\rst{E})$.
}
\end{definition}
\begin{remark}\rm{
For a given complex vector bundle $E$, there is a complex conjugate vector bundle $\overline{E}$ and can be described as a complex vector bundle in which complex structure is multiplication by $-i$ instead of $i$ also, it can be described as a bundle with same fiber as of $E$ but 1-cocycle map is complex conjugate of the 1-cocycle map of $E$.
}
\end{remark}
\begin{remark}\rm{
A real (respectively, quaternionic) structure on a smooth complex vector bundle is equivalent to having a $\C$-linear isomorphism $\Phi:E\rightarrow \overline{\invs{X}E}$ over the identity map $\mathrm{id}_X:X\rightarrow X$ along with $\Phi^{-1}\equiv \overline{\invs{X}\Phi}$ (respectively, $\Phi^{-1}\equiv -\overline{\invs{X}\Phi}$).
}
\end{remark}
\begin{remark}\rm{
For each $k>0$, a given smooth complex Lie algebroid $(\la{L},[.,.],\sharp)$ and a real (respectively, quaternionic) smooth complex bundle $(E,\rst{E})$ on a complex manifold with involution $(X,\inv{X})$ induces $\C$-linear isomorphism $\tilde{\Phi}:\wedge^k \la{L}^\star\otimes E$ and $\overline{\invs{X}(\wedge^k \la{L}^\star\otimes E)}$ covering the identity map on $X$ as well as real (resp. quaternionic) structure $(\tilde{\sigma},\sigma_X)$ on $\wedge^k \la{L}^\star\otimes E$.
}
\end{remark}
\subsection{Real structure on space of connections}\label{realstructure}
For a given smooth complex Lie algebroid $(\la{L},[.,.],\sharp,\rst{\la{L}})$ and a smooth real (resp. quaternionic) vector bundle $(E,\rst{E})$ on a complex manifold with involution $(X,\inv{X})$, let $\nabla=\nabla_0+\alpha$ be an $\la{L}$-connection, where $\alpha\in \mathcal{A}^1_\la{L}\big(X,\mathrm{End}(E)\big)$. Define
$$
\inv{A(E,\la{L})}(\nabla)=\Overline[1.3pt]{\nabla}=\nabla_0+\tilde{\Phi}^{-1}\circ \invs{X}\overline{\alpha}\circ\Phi
$$
and the associted differential operator
$$
d^{\Overline[1.3pt]{\nabla}}=d^{\nabla_0}+\tilde{\Phi}^{-1}\circ \invs{X}\overline{\alpha}\circ\Phi
$$
It is easy to verify that $\inv{A(E,\la{L})}\circ \inv{A(E,\la{L})}\equiv \mathrm{id}_{A(E,\la{L})}$, the map $\inv{A(E,\la{L})}$ is an involution on $A(E,\la{L})$.
\begin{proposition}
For a given smooth compelx Lie algebroid $(\la{L},[.,.],\sharp)$ and smooth real (resp. quaternionic) complex vector bundle $(E,\rst{E})$  on a complex manifold with involution $(X,\inv{X})$. An $\la{L}$-connection $\nabla\in A(E,\la{L})$ is a real (respectively, quaternionic) iff $\Overline[1.3pt]{\nabla}=\nabla$.
\end{proposition}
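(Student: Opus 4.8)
The plan is to reduce both sides of the asserted equivalence to one and the same condition on the coefficient form $\alpha=\nabla-\nabla_0\in\mathcal{A}^1_\la{L}(X,\mathrm{End}(E))$. Recall that an $\la{L}$-connection $\nabla$ is called real (resp.\ quaternionic) exactly when its associated operator $d^\nabla$ is compatible with the real (resp.\ quaternionic) structures, i.e.\ $\tsigma\circ\nabla=\nabla\circ\rst{E}$ on $\mathcal{A}^0_\la{L}(X,E)$; using the remark that identifies $\rst{E}$ with a $\C$-linear isomorphism $\Phi\colon E\to\overline{\invs{X}E}$ (and $\tsigma$ with the induced $\tilde{\Phi}$ on $\wedge^1\la{L}^\star\otimes E$), this is equivalent to $\tilde{\Phi}\circ\nabla=\overline{\invs{X}\nabla}\circ\Phi$, where $\overline{\invs{X}\nabla}$ is the $\la{L}$-connection on $\overline{\invs{X}E}$ obtained from $\nabla$ by conjugating coefficients and pulling back along $\inv{X}$. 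So I would first make this reformulation explicit, and then compare it with the definition $\Overline[1.3pt]{\nabla}=\nabla_0+\tilde{\Phi}^{-1}\circ\invs{X}\overline{\alpha}\circ\Phi$.

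Before the comparison I would record the elementary functorial facts that run the bookkeeping: conjugation and pullback by $\inv{X}$ commute; $\invs{X}$ squares to the identity on pulled-back objects because $\inv{X}^2=\mathrm{id}_X$; $\overline{\overline{\,\cdot\,}}=\mathrm{id}$; and $\Phi^{-1}=\overline{\invs{X}\Phi}$, $\tilde{\Phi}^{-1}=\overline{\invs{X}\tilde{\Phi}}$ (resp.\ with a minus sign in the quaternionic case, entering squared below and hence cancelling). From these it follows that the reference connection $\nabla_0$ is real — this is precisely the normalization built into the construction of $\tilde{\Phi}$ and $\tsigma$, and it is what makes $\Overline[1.3pt]{\nabla_0}=\nabla_0$ — and, crucially, that for a tensorial operator $\alpha\in\mathcal{A}^1_\la{L}(X,\mathrm{End}(E))$ the conjugate–pullback of the operator $\phi\mapsto\alpha(\phi)$ is simply $\phi\mapsto(\invs{X}\overline{\alpha})(\phi)$, carried through $\Phi$ and $\tilde{\Phi}$, since $\alpha$ has no derivative part to transform (this is where the wedge/Leibniz description of $d^\nabla$ from the earlier theorem is used, applied to $d^{\nabla_0}$ together with the $\mathrm{End}(E)$-linearity of $\alpha$).

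The core step is then the short computation. Writing $\nabla=\nabla_0+\alpha$ and subtracting the identity $\tilde{\Phi}\circ\nabla_0=\overline{\invs{X}\nabla_0}\circ\Phi$ (valid because $\nabla_0$ is real) from the compatibility condition $\tilde{\Phi}\circ\nabla=\overline{\invs{X}\nabla}\circ\Phi$, and using that conjugate–pullback is additive on connections and acts on the tensorial part $\alpha$ as above, one obtains that $\nabla$ is real if and only if $\tilde{\Phi}\circ\alpha=\invs{X}\overline{\alpha}\circ\Phi$, i.e.\ $\alpha=\tilde{\Phi}^{-1}\circ\invs{X}\overline{\alpha}\circ\Phi$. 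On the other hand, straight from the definition of $\Overline[1.3pt]{\nabla}$, the equation $\Overline[1.3pt]{\nabla}=\nabla$ reads $\nabla_0+\tilde{\Phi}^{-1}\circ\invs{X}\overline{\alpha}\circ\Phi=\nabla_0+\alpha$, i.e.\ the very same condition $\alpha=\tilde{\Phi}^{-1}\circ\invs{X}\overline{\alpha}\circ\Phi$. Hence $\nabla$ real $\iff\Overline[1.3pt]{\nabla}=\nabla$; the quaternionic case is word-for-word the same, the two sign changes from $\Phi^{-1}=-\overline{\invs{X}\Phi}$ and $\tilde{\Phi}^{-1}=-\overline{\invs{X}\tilde{\Phi}}$ occurring together and cancelling.

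The main obstacle is not a single deep idea but the careful management of several antilinear identifications simultaneously: keeping track of which bundle ($E$, $\overline{\invs{X}E}$, or $\wedge^1\la{L}^\star\otimes E$) each map lives over, checking that $d^\nabla$ genuinely transports to $d^{\overline{\invs{X}\nabla}}$ compatibly with the graded Leibniz structure, and especially verifying rigorously that the chosen $\nabla_0$ is real so that the affine-decomposition argument is legitimate. Once those normalizations are pinned down, the statement reduces to observing that the two conditions on $\alpha$ literally coincide.
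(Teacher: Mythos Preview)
Your proposal is correct and follows essentially the same route as the paper. The paper compresses the whole argument into the single identity $d^{\Overline[1.3pt]{\nabla}}(\rst{E}\circ s\circ\inv{X})=\tilde{\sigma}\circ d^\nabla(s)\circ\inv{}$, declared to hold ``by definition'', and then compares it with the defining compatibility condition for a real connection; you instead unpack this identity via the affine decomposition $\nabla=\nabla_0+\alpha$ and reduce both sides to the condition $\alpha=\tilde{\Phi}^{-1}\circ\invs{X}\overline{\alpha}\circ\Phi$. Your version is more explicit about the point the paper leaves implicit, namely that the reference connection $\nabla_0$ is itself real (equivalently, fixed by the involution), which is exactly what makes the affine comparison legitimate.
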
 
\begin{proof}
It is an easy observation that
$$
d^{\Overline[1.3pt]{\nabla}}(\rst{E}\circ s\circ \inv{X})=\tilde{\inv{}}\circ d^\nabla( s)\circ\inv{}\quad (\text{ by definition })
$$
and an $\la{L}$-connection $\nabla$ is a real (respectively, quaternionic) connection iff 
$$
\tilde{\inv{}}\circ d^\nabla(s)\circ \inv{}=d^\nabla(\inv{E}\circ s\circ \inv{})
$$ 
for $s\in \mathcal{A}^0_\la{L}(X,E)$. Hence, an $\la{L}$-connection $\nabla$ is a real (respectively, quaternionic) $\la{L}$-connection iff $\Overline[1.5pt]{\nabla}=\nabla$.
\end{proof}
The space of real (respectively, quaternionic) $\la{L}$-connecitons is given by
$$
A(E,\la{L})^{\inv{}}=\{\nabla\in A(E,\la{L})\mid \Overline[1.3pt]{\nabla}=\nabla\}.
$$
The real (respectively, quaternionic) structure $\rst{E}$ on the bundle $E$ and the involution $\inv{X}$ on the manifold $X$ induces an involution $\inv{Gau(E)}:\mathrm{Gau}(E)\rightarrow \mathrm{Gau}(E)$, given by
$$
\Overline[1.3pt]{\phi}=\inv{Gau(E)}(\phi)\equiv\Phi^{-1}\circ \invs{X}\overline{\phi}\circ\Phi
$$
\begin{remark}\rm{
Note that for each $s\in \mathcal{A}^0_\la{L}(X,E)$ and $\phi\in \mathrm{Gau}(E)$,
$$
\rst{E}\circ \phi(s)\circ \inv{}=\Overline[1.3pt]{\phi}(\rst{E}\circ s\circ \inv{})\quad \text{( by definition )}
$$
and $\phi$ is a real (respectively, quaternionic) gauge transformstion iff 
$$
\rst{E}\circ \phi(s)\circ\inv{}=\phi(\rst{E}\circ s\circ \inv{})
$$
}
\end{remark}
The space of real (respectively, quaternionic) gauge transformations is given by
$$
\mathrm{Gau}(E)^{\inv{}}=\{\phi\in \mathrm{Gau}(E)\mid\Overline[1.3pt]{\phi}=\phi\}
$$

The space of real (respectively, quaternionic) Gauge transformations is a Lie group with Lie algebra 
$$
\mathfrak{gau}(E)^{\inv{}}=\mathcal{A}^0_\la{L}(X,\mathrm{End}(E))^{\inv{}}=\{u\in \mathcal{A}^0_\la{L}(X,\mathrm{End}(E))\mid \Phi^{-1}\circ \invs{X}\overline{u}\circ\Phi\equiv u\} 
$$ 
and the associated Lie braket structure comes from \eqref{liealgebra}.
\begin{theorem}\label{gaugeinv}
For a given $\la{L}$-connection $\nabla$ and a gauge transformation $\phi\in \mathrm{Gau}(E)$, we have
$$
\Overline[1.3pt]{\nabla^\phi}=\Overline[1.3pt]{\phi}\odot\Overline[1.3pt]{\nabla}
$$
\end{theorem}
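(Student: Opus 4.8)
The plan is to avoid the explicit formula for $\overline{\nabla}$ altogether and to argue instead with the operator identity already extracted in the proof of the preceding proposition. Write $\overline{(\,\cdot\,)}$ for the involution $\inv{A(E,\la{L})}$ (and likewise for $\inv{Gau(E)}$ and for $\tilde\sigma$-conjugation), abbreviate by $\tau(s)=\rst{E}\circ s\circ\inv{X}$ the $\C$-antilinear bijection of $\mathcal{A}^0_\la{L}(X,E)$ induced by the real (resp. quaternionic) structure, and let $\tilde\tau=\tilde\sigma\circ(\,\cdot\,)\circ\inv{X}$ denote the analogous map on $\mathcal{A}^1_\la{L}(X,E)$. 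The two inputs are: (i) the identity $\overline{\nabla}(\tau(s))=\tilde\tau(\nabla(s))$ on $\mathcal{A}^0_\la{L}(X,E)$, which is the degree $0$ part of $d^{\overline{\nabla}}(\rst{E}\circ s\circ\inv{X})=\tilde\sigma\circ d^{\nabla}(s)\circ\inv{X}$ noted above and which holds for every $\la{L}$-connection, in particular for $\nabla^\phi$ (itself a smooth $\la{L}$-connection); and (ii) the relation $\tau(\phi(s))=\overline{\phi}(\tau(s))$ from the remark preceding the definition of $\mathrm{Gau}(E)^{\inv{}}$.

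To feed (ii) into the formula $\nabla^\phi(s)=(\mathrm{id}_{\la{L}^\star}\otimes\phi^{-1})\circ\nabla(\phi(s))$ I first need its $1$-form counterpart,
$$
\tilde\tau\circ(\mathrm{id}_{\la{L}^\star}\otimes\phi^{-1})=(\mathrm{id}_{\la{L}^\star}\otimes\overline{\phi}^{-1})\circ\tilde\tau\qquad\text{on }\mathcal{A}^1_\la{L}(X,E),
$$
equivalently the bundle-map identity $\tilde\sigma\circ(\mathrm{id}_{\la{L}^\star}\otimes\phi)=(\mathrm{id}_{\la{L}^\star}\otimes\overline{\phi})\circ\tilde\sigma$. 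This unwinds from $\overline{\phi}=\Phi^{-1}\circ\invs{X}\overline{\phi}\circ\Phi$ once one observes that the induced isomorphism $\tilde\Phi\colon\wedge^1\la{L}^\star\otimes E\to\overline{\invs{X}(\wedge^1\la{L}^\star\otimes E)}$ is the identity on the $\wedge^1\la{L}^\star$-factor and is $\Phi$ on the $E$-factor. I expect this bookkeeping — keeping track of how $\tilde\sigma$ and $\tilde\Phi$ are assembled from the (trivial) real structure of $\wedge^1\la{L}^\star$ and from $\rst{E}$, $\Phi$ — to be the only genuinely computational step; the rest is formal.

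Granting (i), (ii) and the $1$-form version, the theorem follows from a chain of substitutions:
\begin{align*}
\overline{\nabla^\phi}(\tau(s))
&=\tilde\tau\big(\nabla^\phi(s)\big)
=\tilde\tau\big((\mathrm{id}_{\la{L}^\star}\otimes\phi^{-1})\circ\nabla(\phi(s))\big)\\
&=(\mathrm{id}_{\la{L}^\star}\otimes\overline{\phi}^{-1})\circ\tilde\tau\big(\nabla(\phi(s))\big)
=(\mathrm{id}_{\la{L}^\star}\otimes\overline{\phi}^{-1})\circ\overline{\nabla}\big(\tau(\phi(s))\big)\\
&=(\mathrm{id}_{\la{L}^\star}\otimes\overline{\phi}^{-1})\circ\overline{\nabla}\big(\overline{\phi}(\tau(s))\big)
=\big(\overline{\phi}\odot\overline{\nabla}\big)(\tau(s)),
\end{align*}
where the last equality is the definition \eqref{gaugeaction1} of the gauge action. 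Since $\tau^2=\pm\mathrm{id}$, the map $\tau$ is a bijection of $\mathcal{A}^0_\la{L}(X,E)$, so $\overline{\nabla^\phi}$ and $\overline{\phi}\odot\overline{\nabla}$ agree on every section and hence coincide as $\la{L}$-connections. As a cross-check one can also substitute $\nabla^\phi=\nabla_0+\alpha^\phi$ with $\alpha^\phi=\phi^{-1}\circ(\nabla_0^{\mathrm{End}(E)}\phi)+\phi^{-1}\circ\alpha\circ\phi$ from Remark \ref{gaugeconnection} into the defining formula for $\overline{(\,\cdot\,)}$ and compare term by term with $\overline{\phi}\odot\overline{\nabla}=\nabla_0+(\overline{\alpha})^{\overline{\phi}}$; this reduces to precisely the same compatibility between $\Phi$ and $\tilde\Phi$.
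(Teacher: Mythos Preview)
Your argument is correct, but it follows a genuinely different route from the paper's own proof. The paper invokes Remark~\ref{gaugeconnection} to write $\nabla^\phi=\nabla+\mathrm{id}_{\la{L}^\star}\otimes\phi^{-1}\circ(\nabla^{\mathrm{End}(E)}\phi)$ and then applies the involution term by term, using (without further comment) that conjugation is multiplicative and that $\Overline[1.3pt]{\nabla^{\mathrm{End}(E)}\phi}=\Overline[1.3pt]{\nabla}^{\mathrm{End}(E)}\Overline[1.3pt]{\phi}$. You instead bypass the affine decomposition entirely and work at the level of operators on sections, using the characterizing identity $d^{\Overline[1.1pt]{\nabla}}(\tau s)=\tilde\tau(d^\nabla s)$ from the preceding proposition together with $\tau(\phi(s))=\Overline[1.3pt]{\phi}(\tau(s))$. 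Your approach has the advantage of being base-point free and of isolating explicitly the single nontrivial compatibility $\tilde\sigma\circ(\mathrm{id}_{\la{L}^\star}\otimes\phi)=(\mathrm{id}_{\la{L}^\star}\otimes\Overline[1.3pt]{\phi})\circ\tilde\sigma$ on which both arguments ultimately rest; the paper's version is shorter on the page but absorbs the analogous compatibility into the unproved step $\Overline[1.3pt]{\nabla^{\mathrm{End}(E)}\phi}=\Overline[1.3pt]{\nabla}^{\mathrm{End}(E)}\Overline[1.3pt]{\phi}$. Your closing cross-check via $\nabla^\phi=\nabla_0+\alpha^\phi$ is in fact essentially the paper's proof.
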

\begin{proof}
From Remark \ref{gaugeconnection}, we have 
$$
\nabla^\phi=\nabla+\mathrm{id}_{\la{L}^\star}\otimes\phi^{-1}\circ (\nabla^{\mathrm{End}(E)}\phi)
$$
Furthermore,
\begin{align*}
\Overline[1.3pt]{\nabla^\phi}&=\Overline[1.3pt]{\nabla}+\Overline[1.3pt]{\mathrm{id}_{\la{L}^\star}\otimes\phi^{-1}\circ (\nabla^{\mathrm{End}(E)}\phi)}\\
&=\Overline[1.3pt]{\nabla}+\Overline[1.3pt]{\mathrm{id}_{\la{L}^\star}\otimes\phi^{-1}}\circ \Overline[1.3pt]{(\nabla^{\mathrm{End}(E)}\phi)}\\
&=\Overline[1.3pt]{\nabla}+\mathrm{id}_{\la{L}^\star}\otimes\Overline[1.3pt]{\phi}^{-1}\circ (\Overline[1.3pt]{\nabla}^{\mathrm{End}(E)}\Overline[1.3pt]{\phi})\\
&=\Overline[1.3pt]{\phi}\odot\Overline[1.3pt]{\nabla}\\
\end{align*}
Hence, the left gauge action on the space of $\la{L}$-connections is real (respectively, quaternionic) structure invariant.
\end{proof}
\begin{remark}\rm{
Using above theorem, the space $A(E,\la{L})^{\inv{}}$ is $\mathrm{Gau}(E)^{\inv{}}$-invariant under the Gauge group action given by
\begin{equation}\label{gaugeaction3}
\mathrm{Gau}(E)^{\inv{}}\times A(E,\la{L})^{\inv{}} \rightarrow A(E,\la{L})^{\inv{}}\quad \big((\phi,\nabla)\mapsto \phi\odot \nabla\equiv \nabla^\phi\big)
\end{equation}
and we can talk about the quotient space $\bigslant{A(E,\la{L})^{\inv{}}}{\mathrm{Gau}(E)^{\inv{}}}$.
}
\end{remark}
\begin{remark}
For a given real (respectively, quaternionic) connection $\nabla\in A(E,\la{L})^{\inv{}}$, the real (respectively, quaternionic) isotropic subgroup
$$
\mathrm{Gau}(E)^{\inv{}}_\nabla=\{\phi\in \mathrm{Gau}(E)\mid \Overline[1.3pt]{\phi}=\phi\text{ and }\phi\odot\nabla\equiv \nabla^\phi\equiv \nabla\}
$$
always contains the subgroup $\mathbb{R}^\star.\mathrm{id}\subset \mathrm{Gau}(E)$.
\end{remark}
\begin{definition}[Irreducible connections]
A real (respectively, quaternionic) Lie algebroid connection $\nabla$ is called an irreducible real (respectively, quaternionic) $\la{L}$-connection if the real isotropic subgroup $\mathrm{Gau}_\nabla^{\inv{}}=\mathbb{R}^\star.\mathrm{id}_E$, otherwise the connection $\nabla$ is called reducible real (respectively, quaternionic) $\la{L}$-connection. The space of irreducible real (respectively, quaternionic) $\la{L}$-connections will be denoted by $\widehat{A}(E,\la{L})^{\inv{}}$.
\end{definition}
\begin{remark}\rm{
Following the similar line of arguements as in remark \ref{gaugeinvariant}, the space of irreducible real (respectively, quaternionic) $\la{L}$-connections $\widehat{A}(E,\la{L})^{\inv{}}$ is $\mathrm{Gau}(E)^{\inv{}}$-invariant and we can talk about the quotient $\bigslant{\widehat{A}(E,\la{L})^{\inv{}}}{\mathrm{Gau}(E)^{\inv{}}}$.
}
\end{remark}
\begin{theorem}\label{kernel1}
For a given real (respectively, quaternionic) $\la{L}$-connections $\nabla\in A(E,\la{L})^{\inv{}}$, the following statements are equivalent:
\begin{enumerate}
\item $\mathrm{Gau(E)}_\nabla=\R^\star \mathrm{id}_E$
\item $\mathrm{Ker}(\nabla^{\mathrm{End}(E)}=\R\mathrm{id}_E$
\end{enumerate}
\end{theorem}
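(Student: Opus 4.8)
The plan is to run the real (respectively, quaternionic) transcription of the proof of Theorem~\ref{kernel}, with everything taking place on the fixed-point sets of the induced involutions. I read condition (1) as the assertion that the real isotropy group $\mathrm{Gau}(E)^{\inv{}}_\nabla$ of the preceding definition equals $\R^\star.\mathrm{id}_E$. The starting point is the identity
$$
\mathrm{Gau}(E)^{\inv{}}_\nabla=\mathrm{Gau}(E)\cap\mathrm{Ker}\big(\nabla^{\mathrm{End}(E)}\big)\cap\mathfrak{gau}(E)^{\inv{}},
$$
which I would derive exactly as in Theorem~\ref{kernel}: by Remark~\ref{gaugeconnection}, $\nabla^\phi=\nabla+\mathrm{id}_{\la{L}^\star}\otimes\phi^{-1}\circ(\nabla^{\mathrm{End}(E)}\phi)$, so for $\phi\in\mathrm{Gau}(E)$ one has $\phi\odot\nabla\equiv\nabla$ iff $\nabla^{\mathrm{End}(E)}\phi=0$, while imposing $\Overline{\phi}=\phi$ is exactly asking $\phi\in\mathfrak{gau}(E)^{\inv{}}$. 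I would also note at the outset that $\mathrm{id}_E$ is fixed by the induced structure on $\mathrm{End}(E)$ and satisfies $\nabla^{\mathrm{End}(E)}\mathrm{id}_E=0$ (the Leibniz identity with $T=\mathrm{id}_E$), so $\R.\mathrm{id}_E\subseteq\mathrm{Ker}(\nabla^{\mathrm{End}(E)})\cap\mathfrak{gau}(E)^{\inv{}}$ always, and $\mathrm{Gau}(E)\cap\R.\mathrm{id}_E=\R^\star.\mathrm{id}_E$.

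With the displayed identity in hand, (2)$\Rightarrow$(1) is immediate: if $\mathrm{Ker}(\nabla^{\mathrm{End}(E)})\cap\mathfrak{gau}(E)^{\inv{}}=\R.\mathrm{id}_E$ then $\mathrm{Gau}(E)^{\inv{}}_\nabla=\mathrm{Gau}(E)\cap\R.\mathrm{id}_E=\R^\star.\mathrm{id}_E$. For (1)$\Rightarrow$(2) I would reuse fact~(2) of the proof of Theorem~\ref{kernel}, but with a \emph{real} scalar: given $u\in\mathrm{Ker}(\nabla^{\mathrm{End}(E)})\cap\mathfrak{gau}(E)^{\inv{}}$, compactness of $X$ bounds the fibrewise spectrum of $u$, so I can pick $c\in\R$ with $|c|$ large enough that $\mathrm{id}_E-\tfrac{1}{c}u\in\mathrm{Gau}(E)$. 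Since $c$ is real and $u$ is fixed by the real structure, $\mathrm{id}_E-\tfrac{1}{c}u$ is fixed too, and $\nabla^{\mathrm{End}(E)}(\mathrm{id}_E-\tfrac{1}{c}u)=0$; hence $\mathrm{id}_E-\tfrac{1}{c}u\in\mathrm{Gau}(E)^{\inv{}}_\nabla=\R^\star.\mathrm{id}_E$, which forces $u\in\R.\mathrm{id}_E$. This gives $\mathrm{Ker}(\nabla^{\mathrm{End}(E)})\cap\mathfrak{gau}(E)^{\inv{}}\subseteq\R.\mathrm{id}_E$, the reverse inclusion having been observed above.

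It is then worth recording how this fits with the complex statement. Since $\nabla$ is a real (respectively, quaternionic) connection, $\nabla^{\mathrm{End}(E)}$ commutes with the induced structure on $\mathrm{End}(E)$, which is a genuine \emph{real} structure in both cases — the sign of $\rst{E}\circ\rst{E}$ cancelling under conjugation — so $\mathrm{Ker}(\nabla^{\mathrm{End}(E)})$ is stable under it and therefore equals the complexification of $\mathrm{Ker}(\nabla^{\mathrm{End}(E)})\cap\mathfrak{gau}(E)^{\inv{}}$. Thus condition (2) is equivalent to $\mathrm{Ker}(\nabla^{\mathrm{End}(E)})=\C.\mathrm{id}_E$, i.e.\ by Theorem~\ref{kernel} to $\mathrm{Gau}(E)_\nabla=\C^\star.\mathrm{id}_E$: a real (respectively, quaternionic) $\la{L}$-connection is irreducible in the real sense exactly when it is irreducible in the ordinary sense. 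The bulk of this is bookkeeping; the only points needing genuine care are that the induced involution on $\mathrm{End}(E)$ is of real type and fixes $\mathrm{id}_E$ (the step where the quaternionic case differs from the real one and must be checked explicitly), and that the perturbing scalar $c$ above can indeed be chosen real — so those are where I expect the only real work to lie.
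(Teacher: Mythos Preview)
Your proposal is correct and follows exactly the approach the paper indicates: the paper's proof is the single line ``The proof follows in the same line of arguments as in Theorem~\ref{kernel},'' and what you have written is a careful execution of precisely that strategy --- intersect the two facts from Theorem~\ref{kernel} with the fixed-point set of the induced involution and take the scalar $c$ to be real. Your closing paragraph, showing that real irreducibility coincides with ordinary irreducibility via the observation that the induced involution on $\mathrm{End}(E)$ is always of real type, is a genuinely useful addition not present in the paper.
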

\begin{proof}
The proof follows in the same line of arguements as in the Theorem \ref{kernel}.
\end{proof}
\begin{definition}
The quotient space $\left[\mathrm{Gau}(E)^{\inv{}}\right]^r=\bigslant{\mathrm{Gau}(E)^{\inv{}}}{\R^\star\mathrm{id}_E}$ is called reduced real Gauge group.
\end{definition}
\begin{remark}\rm{
Since $\R^\star\mathrm{id}_E\subset \mathrm{Gau}(E)^{\inv{}}_\nabla$ for each real $\la{L}$-connection, the gauge group action \ref{gaugeaction3} factors through left $\left[\mathrm{Gau}(E)^{\inv{}}\right]^r$-action and we have the quotient spaces (moduli spaces),
$$
B(E,\la{L})^{\inv{}}=\bigslant{A(E,\la{L})^{\inv{}}}{\left[\mathrm{Gau}(E)^{\inv{}}\right]^r}\text{ and }\widehat{B}(E,\la{L})^{\inv{}}=\bigslant{\widehat{A}(E,\la{L})^{\inv{}}}{\left[\mathrm{Gau}(E)^{\inv{}}\right]^r}
$$
which we have studied in Section \ref{sec-4}. We have proved $p:\widehat{A}(E,\la{L})^{\inv{}}\rightarrow\widehat{B}(E,\la{L})^{\inv{}}$ has principal $\left[\mathrm{Gau}(E)^{\inv{}}\right]^r$-bundle structure and the quotient space $\widehat{B}(E,\la{L})^{\inv{}}$ has locally Hausdorff Hilbert manifold structure.
}
\end{remark}
\begin{proposition}\label{gaugeorbitprop}
Let $\nabla,\nabla'\in A(E,\la{L})_l^{\inv{}}$ be two real (respectively, quaternionic) irreducible connections then they lie in same $\mathrm{Gau}(E)$-orbit iff they lie in same $\mathrm{Gau}(E)^{\inv{}}$-orbit.
\end{proposition}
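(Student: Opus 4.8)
The plan is to prove the two implications separately. One is immediate: since $\mathrm{Gau}(E)^{\inv{}}$ is a subgroup of $\mathrm{Gau}(E)$, two connections lying in a common $\mathrm{Gau}(E)^{\inv{}}$-orbit lie, a fortiori, in a common $\mathrm{Gau}(E)$-orbit. All the content is in the converse. (Throughout I write $\Overline[1.3pt]{(\cdot)}$ for the involution $\inv{A(E,\la{L})}$ on connections and, by the same symbol, for the induced involution $\inv{\mathrm{Gau}(E)}$ on gauge transformations, so that $A(E,\la{L})^{\inv{}}$ and $\mathrm{Gau}(E)^{\inv{}}$ are their respective fixed-point sets.)

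Suppose $\nabla' = \phi\odot\nabla$ for some $\phi\in\mathrm{Gau}(E)$. First I would apply $\Overline[1.3pt]{(\cdot)}$ to this identity: by Theorem \ref{gaugeinv} it becomes $\Overline[1.3pt]{\nabla'} = \Overline[1.3pt]{\phi}\odot\Overline[1.3pt]{\nabla}$, and since $\nabla$ and $\nabla'$ are real (respectively, quaternionic) this reads $\nabla' = \Overline[1.3pt]{\phi}\odot\nabla$ as well. So $\phi$ and $\Overline[1.3pt]{\phi}$ both carry $\nabla$ to $\nabla'$. Reading off from the definition of $\odot$ the composition rule $\psi\odot(\chi\odot\nabla)=(\chi\circ\psi)\odot\nabla$, this forces $\phi^{-1}\circ\Overline[1.3pt]{\phi}\in\mathrm{Gau}(E)_{\nabla'}$.

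Next I would use that $\nabla'$ is irreducible. Because $(\nabla')^{\mathrm{End}(E)}$ is compatible with the real (respectively, quaternionic) structure induced on $\mathrm{End}(E)$, irreducibility of the real connection $\nabla'$ upgrades, via Theorem \ref{kernel1}, to $\mathrm{Gau}(E)_{\nabla'} = \C^\star\mathrm{id}_E$. Hence $\phi^{-1}\circ\Overline[1.3pt]{\phi} = \lambda\,\mathrm{id}_E$ for some $\lambda\in\C^\star$, i.e. $\Overline[1.3pt]{\phi} = \lambda\,\phi$. Applying $\Overline[1.3pt]{(\cdot)}$ once more — it is a group homomorphism and an involution, and on the central scalars it acts antilinearly, $\Overline[1.3pt]{\mu\,\mathrm{id}_E}=\overline{\mu}\,\mathrm{id}_E$, directly from $\Overline[1.3pt]{\phi}=\Phi^{-1}\circ\invs{X}\overline{\phi}\circ\Phi$ — gives $\phi = \overline{\lambda}\,\Overline[1.3pt]{\phi} = \overline{\lambda}\,\lambda\,\phi$, so $|\lambda| = 1$. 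Finally I would correct $\phi$ by a unit scalar: pick $\mu\in\C^\star$ with $|\mu|=1$ and $\mu^{2}=\lambda$ (possible exactly because $|\lambda|=1$), so that $\overline{\mu}\,\lambda=\mu$, and set $\psi_0:=\mu\,\phi$. Then $\Overline[1.3pt]{\psi_0}=\overline{\mu}\,\Overline[1.3pt]{\phi}=\overline{\mu}\,\lambda\,\phi=\mu\,\phi=\psi_0$, so $\psi_0\in\mathrm{Gau}(E)^{\inv{}}$; and since the central scalar $\mu\,\mathrm{id}_E$ acts trivially on every $\la{L}$-connection, $\psi_0\odot\nabla=(\mu\,\mathrm{id}_E)\odot(\phi\odot\nabla)=\phi\odot\nabla=\nabla'$. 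Thus $\nabla$ and $\nabla'$ lie in the same $\mathrm{Gau}(E)^{\inv{}}$-orbit.

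I expect the scalar bookkeeping at the end to be the main obstacle: the step from ``$\phi$ and $\Overline[1.3pt]{\phi}$ differ by a complex scalar'' to ``$\phi$ differs from a genuine real (respectively, quaternionic) gauge transformation by a scalar of modulus one'' is precisely where the real structure must be used, through the antilinearity of $\inv{\mathrm{Gau}(E)}$ on scalars (which forces $|\lambda|=1$) and the choice of square root $\mu$. Everything else is formal manipulation of the $\mathrm{Gau}(E)$-action, together with the somewhat hidden point that irreducibility of a real connection pins its full complex isotropy group down to $\C^\star\mathrm{id}_E$ — without which $\lambda$ need not even be a scalar.
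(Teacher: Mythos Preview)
Your proof is correct and follows essentially the same route as the paper's: both use Theorem~\ref{gaugeinv} to see that $\phi$ and $\Overline[1.3pt]{\phi}$ carry $\nabla$ to $\nabla'$, invoke irreducibility to force $\phi^{-1}\Overline[1.3pt]{\phi}=\lambda\,\mathrm{id}_E$ with $\lambda\in\C^\star$, and then rescale $\phi$ by a square root of $\lambda$ to land in $\mathrm{Gau}(E)^{\inv{}}$. Your version is in fact slightly tidier---you derive $|\lambda|=1$ directly from the antilinearity of the involution, whereas the paper first divides by $\sqrt{r}$ (a step that is redundant once one notices $r=1$)---and you make explicit the passage from real irreducibility $\mathrm{Gau}(E)^{\inv{}}_{\nabla'}=\R^\star\mathrm{id}_E$ to complex irreducibility $\mathrm{Gau}(E)_{\nabla'}=\C^\star\mathrm{id}_E$, which the paper uses without comment.
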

\begin{proof}
Let $\nabla,\nabla'\in A(E,\la{L})^{\inv{}}$ be two real (respectively, quaternionic) connecitons lying in same $\mathrm{Gau}(E)^{\inv{}}$-orbit  then the correspoinding $\mathrm{Gau}(E)$-orbits will also be equal. So, only thing we need to prove is that if the corresponding $\mathrm{Gau}(E)$-orbits are same then the corresponding $\mathrm{Gau}(E)^{\inv{}}$-orbits are same.

Let $u\in \mathrm{Gau}(E)$ such that $\nabla=u\odot \nabla'$. We have,
\begin{align*}
\nabla=u\odot \nabla'=\Overline[1.3pt]{\nabla}&=\Overline[1.3pt]{u}\odot \Overline[1.3pt]{\nabla'}\\
&=\Overline[1.3pt]{u}\odot \nabla'
\end{align*} 
From above, we have $\nabla'=\big(u^{-1}.\Overline[1.3pt]{u}\big)\odot\nabla'$ but $\nabla'$ is an irreducle $\la{L}$-connection implies $u^{-1}.\Overline[1.3pt]{u}\in \C^\star\times \mathrm{id}_E$. Let $u^{-1}.\Overline[1.3pt]{u}=re^{i\theta}\mathrm{id}_E$ and $u'=\frac{u}{\sqrt{r}}$. Note that, $\nabla=u'\odot \nabla'$ and $u'^{-1}.\Overline[1.3pt]{u'}=e^{i\theta}\mathrm{id}_E$. Define $v=e^{i\frac{\theta}{2}}u'$, then 
\begin{align*}
\Overline[1.3pt]{v}&=e^{-i\frac{\theta}{2}}\Overline[1.3pt]{u'}\\
&=e^{i\frac{\theta}{2}}u'\\
&=v
\end{align*}
Note that $v\odot \nabla'=u'\odot \nabla'=\nabla$, implies $\nabla$ and $\nabla'$ are in same $\mathrm{Gau}(E)^{\inv{}}$-orbit.
\end{proof}
\subsection{Sobolev completion of the space of connections}
For a given smooth complex Lie algebroid $(\la{L},[.,.],\sharp)$ and smooth complex vector bundle $E$ on a complex manifold $X$, let $h^\la{L}$ and $h^E$ be the Hermitian metrics on the bundles, respectively. Let $g$ be the Riemannian metric on the complex manifold, which further gives the volume density $\mathrm{vol}_g$ as well as Borel measure $\mu_g$ on $X$. We can define the Sobolev space of $\la{L}$-connections, described as below.

A global section $\psi\in \Gamma(X,E)$ is called Borel measurable, if $\psi^{-1}(U)\subset X$ is Borel measurable, for each open subset $U\subset E$. Let $L^2_l(X,E)$, $l\in\mathbb{N}-\{0\}$ be the vector space of global Borel measurable sections whose weak derivatives of order upto $l$ are square integrable. The Sobolev spaces $L^2_l(X,E)$ are Hilbert spaces with inner product
$$
\langle \psi,\phi\rangle_{L^2_l}=\displaystyle\sum_{j=0}^l\langle \nabla^j\psi,\nabla^j\phi\rangle_{L^2}=\displaystyle\sum_{j=0}^l\int \langle \nabla^j\psi,\nabla^j\phi\rangle_{h^E}d\mu_g
$$
where $\langle \nabla^j\psi,\nabla^j\phi\rangle_{L^2}$ is the finite $L^2$-inner product of weak derivatives of order $j$ of given global sections $\phi,\psi\in \Gamma(X,E)$.

The metric $h^E$ on $E$ induces a metric on $\mathrm{End}(E)$ given by $\langle f_1,f_2\rangle_{h^{\mathrm{End}(E)}}=\mathrm{tr}(f_2^\star.f_1)$, where $f_2^\star$ is adjoint of $f_2$ w.r.t metric $h^E$. Using the Hermitian metric $h^{\la{L}}$ and the induced Hermitian metric $h^{\mathrm{End}(E)}$, the Sobolev space $L^2_l(X,\wedge^k\la{L}^\star\otimes\mathrm{End}(E))$ which will be denoted by $\mathcal{A}^k_\la{L}\big(X,\mathrm{End}(E)\big)_l$, can be defined.

Using the induced Hermitian metric $h^{\mathrm{End}(E)}$, we have $L^2$-metric on the space $\mathcal{A}^0_\la{L}\big(X,\mathrm{End}(E)\big)^{\inv{}}$ as well as the following $L^2$-orthogonal decomposition
\begin{equation}\label{decomp}
\mathcal{A}^0_\la{L}\big(X,\mathrm{End}(E)\big)^{\inv{}}=\Big[\mathcal{A}^0_\la{L}\big(X,\mathrm{End}(E)\big)^{\inv{}}\Big]^0+\mathbb{R}.\mathrm{id}_E
\end{equation}
where, $\Big[\mathcal{A}^0_\la{L}\big(X,\mathrm{End}(E)\big)^{\inv{}}\Big]^0=\left\{f\in \mathcal{A}^0_\la{L}\big(X,\mathrm{End}(E)\big)^{\inv{}}\mid \int \mathrm{tr}(f)d\mu_g=0\right\}$


Using lemma \ref{liealgconn}, for a fixed trivial smooth lie algebroid connections $\nabla_0$, the Sobolev space of connections is given by
$$
A(E,\la{L})_l=\{\nabla_0+\alpha \mid \alpha\in \mathcal{A}^1_\la{L}(X,\mathrm{End}(E))_l\}
$$
The real (respectively, quaternionic) structure $\rst{E}$ induces an involution $\inv{A(E,\la{L})_l}$, which can be described in same line of arguments as in Subsection \ref{realstructure}.
\begin{definition}
For a given smooth real (respectively, quaternionic) vector bundle $E$ and a smooth complex Lie algebroid $(\la{L},[.,.],\sharp)$, a Sobolev $\la{L}$-connection $\nabla\in A(E,\la{L})_l$ is called a real (respectively, quaternionic) Sobolev $\la{L}$-connection, if $\inv{A(E,\la{L})_l}(\nabla)=\Overline[1.3pt]{\nabla}=\nabla$.
\end{definition}
The space of real (respectively, quaternionic) Sobolev $\la{L}$-connections is given by
$$
A(E,\la{L})_l^{\inv{}}=\{\nabla\in A(E,\la{L})_l\mid \Overline[1.3pt]{\nabla}=\nabla\}=\{\nabla_0+\alpha \mid \alpha\in \mathcal{A}^1_\la{L}(X,\mathrm{End}(E))_l^{\inv{}}\}
$$
Thus, the space of real Sobolev connections is a Hilbert space as well as a Hilbert manifold.

\subsection{Sobolev completion of space of gauge transformations}
To define the Sobolev gauge group action on the Sobolev space of connections, we need to define Sobolev completion of the gauge group $\mathrm{Gau}(E)_{l+1}$, in case of $l> \frac{1}{2}\mathrm{dim}_{\mathbb{R}}X$.

For $l> \frac{1}{2}\mathrm{dim}_{\mathbb{R}}X$, using Sobolev embedding theorem,  we have the compact embedding 
$$
\mathcal{A}^0_\la{L}(X,\mathrm{End}(E))_{l+1}\subset \mathcal{A}^0_\la{L}(X,\mathrm{End}(E))_l\subset C^0_\la{L}(X,\mathrm{End}(E))
$$ 
along with,
$$
\mathcal{A}^0_\la{L}(X,\mathrm{End}(E))_{l+1}^\sigma\subset \mathcal{A}^0_\la{L}(X,\mathrm{End}(E))_l^\sigma\subset C^0_\la{L}(X,\mathrm{End}(E))^{\inv{}}
$$ 
where $C^0_\la{L}(X,\mathrm{End}(E))$ is the space of global continuous sections.

Using Sobolev multiplication theorem, the multiplication 
$$
\mathcal{A}^0_\la{L}(X,\mathrm{End}(E))\times \mathcal{A}^0_\la{L}(X,\mathrm{End}(E))\rightarrow \mathcal{A}^0_\la{L}(X,\mathrm{End}(E))\quad\big((\phi,\psi)\mapsto \phi\circ \psi\big)
$$ 
can be extended continuously to 
$$
\mathcal{A}^0_\la{L}(X,\mathrm{End}(E))_{l+1}^\sigma\times \mathcal{A}^0_\la{L}(X,\mathrm{End}(E))_{l+1}^\sigma\rightarrow \mathcal{A}^0_\la{L}(X,\mathrm{End}(E))_{l+1}^\sigma.
$$
This continuous extension makes $\mathcal{A}^0_\la{L}(X,\mathrm{End}(E))_{l+1}^\sigma$, a Banach algebra over $\R$.

Define the gauge group,
$$
\mathrm{Gau}(E)_{l+1}^\sigma=\{\phi\in \mathcal{A}^0_\la{L}(X,\mathrm{End}(E))_{l+1}^\sigma\mid \phi \text{ is invertible in }\mathcal{A}^0_\la{L}(X,\mathrm{End}(E))_{l+1}^\sigma\}
$$
which is an open subset of $\mathcal{A}^0_\la{L}(X,\mathrm{End}(E))_{l+1}^\sigma$; hence $\mathrm{Gau}(E)_{l+1}^\sigma$ is a Hilbert manifold. Furthermore, $\mathrm{Gau}(E)_{l+1}^\sigma$ is a Hilbert Lie group with Lie algebra $\mathfrak{gau}(E)_{l+1}^\sigma=\mathcal{A}^0_\la{L}(X,\mathrm{End}(E))_{l+1}^\sigma$ and the Lie bracket structure can be described by continuously extending the Lie bracket structure \eqref{liealgebra} to appropriate Sobolev spaces of real (respectively, quaternionic) bundle morphisms, using Sobolev multiplication theorem in the range $l>\frac{1}{2}\mathrm{dim}_{\R}(X)$.

In the range $l>\frac{1}{2}\mathrm{dim}_{\R}(X)$, using the compact embedding $\mathcal{A}^0_\la{L}\big(X,\mathrm{End}(E)\big)_{l+1}^\sigma\subset \mathcal{A}^0_\la{L}\big(X,\mathrm{End}(E)\big)_l^\sigma$, the Sobolev space $\mathcal{A}^1_\la{L}\big(X,\mathrm{End}(E)\big)_l^\sigma$ has topological right as well as left $\mathcal{A}^0_\la{L}\big(X,\mathrm{End}(E)\big)_{l+1}^\sigma$ module structure. 

The Gauge group action \ref{gaugeaction3} can be extended continuously, using the bimodule structure and Sobolev multiplication theorem, described as
\begin{align*}
\odot:\mathrm{Gau}(E)_{l+1}^{\inv{}}\times A(E,\la{L})^{\inv{}}_l&\rightarrow A(E,\la{L})^{\inv{}}_l\\
(\phi,\nabla)&\mapsto \phi\odot \nabla=\nabla^\phi\\
&\qquad\qquad\hspace{4pt}=\nabla_0+\alpha^\phi=\nabla_0+\phi^{-1}\circ (\nabla^{\mathrm{End}(E)}_0\phi)+\phi^{-1}\circ \alpha\circ \phi
\end{align*}
where, $\nabla_0^{\mathrm{End}(E)}:\mathcal{A}^0_\la{L}(X,\mathrm{End}(E))\rightarrow \mathcal{A}^1_\la{L}(X,\mathrm{End}(E))$ is a linear operator. Since continuous linear operators are smooth and matrix multiplication is also smooth, the gauge action $\odot$ is a smooth map between Hilbert manifolds.

We can define the space $\widehat{A}(E,\la{L})^{\inv{}}_l$ of irreducible Sobolev real (resp. quaternionic) connections as done in Subsection \ref{realstructure} and following the same line of arguements as in \ref{gaugeinvariant}, it can be verified that $\widehat{A}(E,\la{L})^{\inv{}}_l \subset A(E,\la{L})^{\inv{}}_l$ is $\mathrm{Gau}(E)^{\inv{}}_{l+1}$-invariant subspace. Define the moduli space,
$$
B(E,\la{L})^{\inv{}}_l=\bigslant{A(E,\la{L})^{\inv{}}_l}{\mathrm{Gau}(E)^{\inv{}}_{l+1}}
$$
of Sobolev real (respectively, quaternionic) connections as well as modulie space 
$$
\widehat{B}(E,\la{L})^{\inv{}}_l=\bigslant{\widehat{A}(E,\la{L})^{\inv{}}_l}{\mathrm{Gau}(E)^{\inv{}}_{l+1}}
$$
of irreducible Sobolev real (respectively, quaternionic) connections.

\begin{theorem}
Let $G$ be a Banach-Lie group over a field $\R$ with Lie algebra $\mathfrak{g}$ and $N$ a normal Banach-Lie subgroup over field $\R$ of $G$ then the quotient space $\bigslant{G}{N}$ has Lie group structure with Lie algebra $\bigslant{\mathfrak{g}}{\mathfrak{n}}$ in a unique way, such that the quotient map $q:G\rightarrow \bigslant{G}{N}$ is a smooth map and any map between Banach manifolds $f:\bigslant{G}{N}\rightarrow X$ between Banach manifolds is smooth iff the map $f\circ q:G\rightarrow X$ is smooth. 
\end{theorem}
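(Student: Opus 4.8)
The plan is to equip $\bigslant{G}{N}$ (a group since $N$ is normal) with an atlas obtained by transporting a local slice for $N$ in $G$ around by left translations, to check that this makes multiplication, inversion and the projection $q$ smooth, and then to read off the model space and Lie algebra and deduce the universal property and uniqueness. First I would use that $N$, being a Banach-Lie subgroup, is a closed embedded submanifold of $G$, so $\mathfrak n=\mathrm{Lie}(N)$ is a closed, complemented subspace of $\mathfrak g$; fix a topological decomposition $\mathfrak g=\mathfrak n\oplus\mathfrak m$ with $\mathfrak m$ closed and a submanifold chart $\psi\colon W\to V_{\mathfrak n}\times V_{\mathfrak m}$ around $e$ with $\psi(e)=(0,0)$ and $\psi(W\cap N)=V_{\mathfrak n}\times\{0\}$, where $V_{\mathfrak n}\subset\mathfrak n$, $V_{\mathfrak m}\subset\mathfrak m$ are open neighbourhoods of $0$. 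After shrinking $W$, the slice $S=\psi^{-1}(\{0\}\times V_{\mathfrak m})$ meets each coset $gN$ with $g$ near $e$ in at most one point, and $WN=q^{-1}(q(W))$ is open, so $q(W)$ is open in $\bigslant{G}{N}$ and $q|_S$ is a homeomorphism onto a neighbourhood $U_0$ of $eN$, yielding a chart $\xi_0\colon V_{\mathfrak m}\to U_0$. For $g\in G$ set $U_g=g\cdot U_0$ and $\xi_g(m)=g\cdot\xi_0(m)$; these cover $\bigslant{G}{N}$ because left translations permute the fibres of $q$.

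Next I would check that the transition maps $\xi_g^{-1}\!\circ\xi_{g'}$ are smooth: on an overlap $g'\xi_0(m')$ and $g\xi_0(m)$ lie in one $N$-coset, so $m$ is recovered from $m'$ by forming $g^{-1}g'\xi_0(m')$, writing it in the chart $\psi$, and applying the bounded linear projection onto $V_{\mathfrak m}$ — a composition of group multiplication, $\psi$, and a continuous projection, hence smooth. Thus $\bigslant{G}{N}$ is a Banach manifold modelled on $\mathfrak m\cong\mathfrak g/\mathfrak n$, on which $G$ acts smoothly by left translations; in the charts $(W,\psi)$ and $(U_0,\xi_0)$ the map $q$ is $(n,m)\mapsto m$, so $q$ is a smooth surjective submersion admitting smooth local sections $s$ with $q\circ s=\mathrm{id}$. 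Smoothness of the operations on $\bigslant{G}{N}$ then follows by lifting: writing $\mu_G$ and $\mu'$ for multiplication on $G$ and on $\bigslant{G}{N}$ one has $\mu'\circ(q\times q)=q\circ\mu_G$, and since $q\times q$ admits local sections $s\times s$, near any point $\mu'=q\circ\mu_G\circ(s\times s)$ is smooth; inversion is handled identically. Hence $\bigslant{G}{N}$ is a Banach-Lie group and $q$ a morphism of Lie groups; differentiating $q$ at $e$ gives a surjective bracket-preserving map $\mathfrak g\to\mathrm{Lie}(\bigslant{G}{N})$ with kernel $\mathfrak n$, so $\mathrm{Lie}(\bigslant{G}{N})=\bigslant{\mathfrak g}{\mathfrak n}$.

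For the universal property, if $f\colon\bigslant{G}{N}\to X$ has $f\circ q$ smooth then, choosing near any point a smooth local section $s$ of $q$, one has $f=(f\circ q)\circ s$ there, so $f$ is smooth; the converse is immediate from smoothness of $q$. Uniqueness follows from this criterion: any second Lie-group structure on the set $\bigslant{G}{N}$ for which $q$ is smooth is again a submersion with smooth local sections (by the same slice argument), so the identity map is smooth in both directions, forcing the two structures to agree. The one genuinely infinite-dimensional point, and the step needing care, is the existence of the closed complement $\mathfrak m$, i.e. of the local slice; this is automatic in finite dimensions, and in the situation needed here — $N=\R^\star\mathrm{id}_E$ sitting inside a Sobolev gauge group — it is immediate since $N$ is a one-dimensional closed central subgroup. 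The remainder is a routine transport-of-structure argument.
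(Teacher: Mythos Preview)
Your outline is correct and, in fact, goes well beyond what the paper itself does: the paper's ``proof'' of this theorem consists solely of the line ``See \cite{MJDJFG}, \cite{GH} and \cite{HGKHN}'', i.e.\ it defers entirely to the literature (Dupr\'e--Glazebrook, Gl\"ockner, and Gl\"ockner--Neeb). What you have written is essentially the standard slice-and-transport argument that those references carry out in detail, so in content your approach agrees with theirs even though the paper does not reproduce it.

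One remark on the step you correctly single out as delicate: the existence of a closed topological complement $\mathfrak m$ to $\mathfrak n$ in $\mathfrak g$ is not a consequence of $N$ being a closed subgroup in the Banach setting --- it is typically \emph{built into} the definition of ``Banach-Lie subgroup'' (as a split submanifold), and this is precisely the hypothesis the cited references use. You handle this honestly by noting that in the only case the paper needs, namely $N=\R^\star\mathrm{id}_E$ inside the Sobolev gauge group, $\mathfrak n$ is one-dimensional and hence automatically complemented (indeed the paper records the explicit $L^2$-orthogonal splitting in~\eqref{decomp}). So your argument is sound for the intended application, and your caveat is exactly the right one to make.
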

\begin{proof}
See \cite{MJDJFG},\cite{GH} and \cite{HGKHN}
\end{proof}
From above theorem, the reduced real Sobolev Gauge group 
$$
(\mathrm{Gau}(E)_{l+1}^{\inv{}})^r=\bigslant{\mathrm{Gau}(E)_{l+1}^{\inv{}}}{R^\star . \mathrm{id}_E}
$$ 
is a Hilbert Lie group with Lie algebra $\Big[\mathcal{A}^0_\la{L}\big(X,\mathrm{End}(E)\big)^{\inv{}}_{l+1}\Big]^0$

\subsection{Tangential map associated to Gauge action and its adjoint} Let $\nabla=\nabla_0+\alpha$ be a fixed connection in $A(E,\la{L})^{\inv{}}_l$ and the associated differential operator is $d^\nabla$. Note that $$
d^\nabla\big(\mathcal{A}_\la{L}^0(X,\mathrm{End}(E))^{\inv{}}_{l+1}\big)\subset \mathcal{A}_\la{L}^1(X,\mathrm{End}(E))^{\inv{}}_l
$$ 
Let $i:\mathcal{A}^0_\la{L}(X,\mathrm{End}(E))^{\inv{}}_{l+1}\hookrightarrow \mathcal{A}^0_\la{L}(X,\mathrm{End}(E))^{\inv{}}_l$ be the canonical compact embedding, the tangential map associated to Gauge action at $(\mathrm{id}_E,\nabla)$ is given by
$$
d^\nabla:\mathcal{A}^0_\la{L}(X,\mathrm{End}(E))_{l+1}^{\inv{}}\rightarrow \mathcal{A}^1_\la{L}(x,\mathrm{End}(E))_l^{\inv{}}\quad \big(d^\nabla(\gamma)=d^{\nabla_0}(\gamma)+[\alpha,\gamma]\big)
$$
The tangential map $d^\nabla$ can be described as $d^\nabla\equiv d^{\nabla_0}+\mathrm{ad}(\alpha)\circ i$, where 
$$
\mathrm{ad}(\alpha):\mathcal{A}^0_\la{L}(X,\mathrm{End}(E))^{\inv{}}_l\rightarrow \mathcal{A}^1_\la{L}(X,\mathrm{End}(E))^{\inv{}}_l\quad\big(\mathrm{ad}(\alpha)(\gamma)\mapsto [\alpha,\gamma]\big)
$$ 
can be yielded by continuous extension of the map
$$
m:\mathcal{A}^1_\la{L}(X,\mathrm{End}(E))^{\inv{}}\times \mathcal{A}^0_\la{L}(X,\mathrm{End}(E))^{\inv{}}\rightarrow \mathcal{A}^1_\la{L}(X,\mathrm{End}(E))^{\inv{}}\qquad\big((\alpha,\gamma)\mapsto [\alpha,\gamma]\big)
$$ 
to appropriate Sobolev spaces, using Sobolev mutiplication thorem.

The formal adjoint operator associated to 
$$
[\alpha,.]:\mathcal{A}^0_\la{L}(X,\mathrm{End}(E))^{\inv{}}\rightarrow \mathcal{A}^1_\la{L}(X,\mathrm{End}(E))^{\inv{}}\quad\big(\text{for }\alpha\in \mathcal{A}^1_\la{L}(X,\mathrm{End}(E))^{\inv{}}\big)
$$ 
w.r.t Hermitian metric $h^{\mathrm{End}(E)}$ gives a $C^\infty(X,\mathbb{R})$-bilinear map
$$
m^\star:\mathcal{A}^1_\la{L}(X,\mathrm{End}(E))^{\inv{}}\times \mathcal{A}^1_\la{L}(X,\mathrm{End}(E))^{\inv{}}\rightarrow \mathcal{A}^0_\la{L}(X,\mathrm{End}(E))^{\inv{}}
$$.
Using Sobolev multiplication theorem and continuosly extending the map $m^\star$ to appropriate Sobolev spaces, we have the map
$$
\mathrm{ad}(\alpha)^\star:\mathcal{A}^1_\la{L}(X,\mathrm{End}(E))^{\inv{}}_l\rightarrow \mathcal{A}^0_\la{L}(X,\mathrm{End}(E))^{\inv{}}_l
$$
for each fixed $\alpha\in \mathcal{A}^1_\la{L}(X,\mathrm{End}(E))^{\inv{}}_l$ and the adjoint operator 
$$
(d^\nabla)^\star:\mathcal{A}^1_\la{L}(X,\mathrm{End}(E))^{\inv{}}_l\rightarrow \mathcal{A}^0_\la{L}(X,\mathrm{End}(E))^{\inv{}}_{l-1}
$$ 
which can be described as $(d^\nabla)^\star=(d^{\nabla_0})^\star+i\circ\mathrm{ad}(\alpha)^\star$. The operator $(d^{\nabla_0})^\star$ is the formal adjoint of the operator $d^{\nabla_0}$ w.r.t induced Hermitian metric on $\la{L}^\star \otimes \mathrm{End}(E)$. 

\section{Moduli of real (respectively, quaternionic) Lie algebroid connections}\label{sec-4}
In this section, we will assume the the smooth complex Lie algebroid $(\la{L},[.,.],\sharp)$ is a transitive Lie algebroid. For a given smooth complex vector bundle with real (respectively, quaternionic) structure $(E,\rst{E})$, the moduli space as gauge equivalence classes, which we want to study are $B(E,\la{L})^{\inv{}}$ and $\widehat{B}(E,\la{L})^{\inv{}}$ also, we have described the Hilbert manifold structure on $\widehat{B}(E,\la{L})^{\inv{}}$ and proved that $\widehat{p}:A(E,\la{L})_l^{\inv{}}\rightarrow \widehat{B}(E,\la{L})^{\inv{}}$ is a principal $\mathrm{Gau}(E)^{\inv{}}_{l+1}$ bundle.  

For any connection $\nabla=\nabla_0+\alpha$ in $A(E,\la{L})_l$, the associated Laplacian 
$$
\Delta_\alpha=(d^\nabla)^\star\circ d^\nabla:\mathcal{A}^0_\la{L}(X,\mathrm{End}(E))_{l+1}\rightarrow \mathcal{A}^0_\la{L}(X,\mathrm{End}(E))_{l-1}
$$ 
can be expressed as, 
$$
\Delta_\alpha=\Delta_0+d^{\nabla_0}\circ \mathrm{ad}(\alpha)\circ i+i\circ \big(\mathrm{ad}(\alpha)\big)^\star\circ \mathrm{ad}(\alpha)\circ i+i\circ \big(\mathrm{ad}(\alpha)\big)^\star \circ d^{\nabla_0}
$$ 
Since continuous operators on Banach spaces are compact and composition of compact and continuous operators on Sobolev space is compact, the Laplacian $\Delta_\alpha$ will be a Fredholm operator if the Laplacian $\Delta_0$ is a Fredholm operator. But the symbol map ${\inv{}}_1(d^{\nabla_0})(\xi_x)(.)\equiv \sharp^\star(\xi_x)\otimes(.)$ and ${\inv{}}_1\big((d^{\nabla_0})^\star\big)(\xi_x)(.)\equiv \big({\inv{}}_1(d^{\nabla_0})(\xi_x)\big)^\star(.)\equiv \big(\sharp^\star(\xi_x)\big)^\star\otimes(.)$ implies the symbol map ${\inv{}}_2\big((d^{\nabla_0})^\star\circ d^{\nabla_0}\big)(\xi_x)$ is an isomorphism iff the dual map $\sharp^\star:(T^{\C} X)^\star\rightarrow \la{L}^\star$ has trivial kernel or, the anchor map $\sharp:\la{L}\rightarrow T^{\C}X$ is surjective.

Hence, the Laplacian $\Delta_\alpha=(d^\nabla)^\star\circ d^\nabla$ for a given connection $\nabla=\nabla_0+\alpha$ in $A(E,\la{L})_l$ is an Elliptic differential operator furthermore a Fredholm operator (for details, see \cite[Chapter 4]{ROW}.


\begin{lemma}\label{decomp1}
For any $\nabla\in A(E,\la{L})$ and $l>\frac{1}{2}\mathrm{dim}_{\mathbb{R}}X$, we have the $L^2$-orthogonal decomposition
$$
\mathcal{A}^1_\la{L}(X,E)_l=(\mathrm{im}(d^\nabla))\oplus (\mathrm{ker}(d^\nabla)^\star)
$$
\end{lemma}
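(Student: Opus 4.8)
The plan is to establish the Hodge-type decomposition by exploiting the Fredholm (elliptic) property of the Laplacian $\Delta_\alpha = (d^\nabla)^\star \circ d^\nabla$ that was just verified to hold, given that the anchor map $\sharp : \la{L} \to T^\C X$ is surjective (transitivity of $\la{L}$). First I would note that, since $\Delta_\alpha$ is an elliptic (hence Fredholm) self-adjoint operator between the relevant Sobolev completions, elliptic regularity and the standard Banach-space Fredholm alternative give the $L^2$-orthogonal decomposition
\[
\mathcal{A}^0_\la{L}(X,\mathrm{End}(E))_l = \mathrm{im}(\Delta_\alpha) \oplus \mathrm{ker}(\Delta_\alpha),
\]
where $\mathrm{ker}(\Delta_\alpha) = \mathrm{ker}(d^\nabla)$ because $\langle \Delta_\alpha \gamma, \gamma \rangle_{L^2} = \|d^\nabla \gamma\|^2_{L^2}$ forces $d^\nabla \gamma = 0$ whenever $\Delta_\alpha \gamma = 0$ (the usual integration-by-parts / self-adjointness argument, valid once $\gamma$ is smooth by regularity). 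I would state this for the bundle $E$ itself rather than $\mathrm{End}(E)$ — the argument is identical since $d^\nabla$ on $\mathcal{A}^0_\la{L}(X,E) \to \mathcal{A}^1_\la{L}(X,E)$ has the same elliptic symbol computation, the anchor being surjective.

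Next I would transfer this to the level of $1$-forms. Applying $d^\nabla$ to the decomposition of $\mathcal{A}^0_\la{L}(X,E)_{l+1}$ and using that $d^\nabla$ maps $\mathcal{A}^0_\la{L}(X,E)_{l+1}$ into $\mathcal{A}^1_\la{L}(X,E)_l$, one gets $\mathrm{im}(d^\nabla) = d^\nabla\big(\mathrm{im}\,(d^\nabla)^\star d^\nabla\big)$ inside $\mathcal{A}^1_\la{L}(X,E)_l$. The key orthogonality is $\langle d^\nabla \gamma, \beta \rangle_{L^2} = \langle \gamma, (d^\nabla)^\star \beta \rangle_{L^2}$, which shows immediately that $\mathrm{im}(d^\nabla) \perp \mathrm{ker}((d^\nabla)^\star)$ in $\mathcal{A}^1_\la{L}(X,E)_l$. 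It remains to show these two subspaces span the whole space. For this I would use that $\mathrm{im}(d^\nabla)$ is closed: this follows from the Fredholm property, since $d^\nabla$ itself is overdetermined-elliptic (its symbol is injective, being $\sharp^\star(\xi_x) \otimes (\cdot)$, injective because $\sharp$ is surjective), hence has closed range and finite-dimensional cokernel in the appropriate Sobolev spaces. Given a closed subspace with an orthogonal complement, the orthogonal complement of $\mathrm{im}(d^\nabla)$ in $\mathcal{A}^1_\la{L}(X,E)_l$ is exactly $\{\beta : \langle d^\nabla \gamma, \beta\rangle_{L^2} = 0 \ \forall \gamma\} = \{\beta : \langle \gamma, (d^\nabla)^\star \beta\rangle = 0 \ \forall \gamma\} = \mathrm{ker}((d^\nabla)^\star)$, where the last identification uses density of $\mathcal{A}^0_\la{L}(X,E)_{l+1}$ in $L^2$. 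This yields the claimed decomposition.

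The main obstacle, and the point requiring care, is the functional-analytic bookkeeping of Sobolev indices: $(d^\nabla)^\star$ drops regularity by one, so one must be careful that ``$\mathrm{ker}((d^\nabla)^\star)$'' is interpreted consistently — one shows by elliptic regularity that an $L^2_l$ solution of $(d^\nabla)^\star \beta = 0$ is automatically smooth, so the kernel is the same whether computed in $L^2_l$ or in $L^2$, and the $L^2$-pairing is well-defined on it. The other subtlety is justifying closedness of $\mathrm{im}(d^\nabla)$ cleanly; the cleanest route is to invoke the decomposition via the Laplacian $\Delta = (d^\nabla)^\star d^\nabla$ established in the first paragraph (whose ellipticity was proved in the text preceding this lemma), write any $\beta \in \mathcal{A}^1_\la{L}(X,E)_l$ after choosing $\gamma$ with $\Delta \gamma$ equal to the $\mathrm{im}(\Delta)$-component of $(d^\nabla)^\star \beta$, and check that $\beta - d^\nabla \gamma$ lies in $\mathrm{ker}((d^\nabla)^\star)$. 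Once the regularity and index conventions are pinned down, the proof is a formal consequence of the elliptic theory already assembled in Section~\ref{sec-4}, and in particular of the surjectivity of the anchor map guaranteed by transitivity of $\la{L}$.
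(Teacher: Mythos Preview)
Your proposal is correct and follows essentially the same route as the paper: both use the Fredholm property of $\Delta_\alpha=(d^\nabla)^\star d^\nabla$ (with $\mathrm{ker}(\Delta_\alpha)=\mathrm{ker}(d^\nabla)$) to produce a Green-type inverse, deduce that $\mathrm{im}(d^\nabla)$ is closed, and then identify its $L^2$-orthogonal complement with $\mathrm{ker}((d^\nabla)^\star)$ via the adjunction $\langle d^\nabla\gamma,\beta\rangle_{L^2}=\langle\gamma,(d^\nabla)^\star\beta\rangle_{L^2}$. The paper's argument for closedness is exactly your ``cleanest route'' (it builds the projector $\mathrm{id}-d^\nabla\circ(\Delta_\alpha|_{\mathrm{ker}(\Delta_\alpha)^\perp})^{-1}\circ(d^\nabla)^\star$ and notes $\mathrm{im}(d^\nabla)$ is its kernel on a closed subspace), whereas your first suggestion via overdetermined ellipticity of $d^\nabla$ is an equally valid alternative that the paper does not invoke; your remarks on Sobolev-index bookkeeping and elliptic regularity of elements in $\mathrm{ker}((d^\nabla)^\star)$ add precision that the paper leaves implicit.
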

\begin{proof}
From above discussion, the Laplacian $\Delta_\alpha$ associated to the differential $d^\nabla$ is Fredholm, the $\mathrm{Ker}(\Delta_\alpha)$ will be finite dimensional and $\mathrm{im}(\Delta_\alpha)$ will be a closed subspace. 

Let $\mathrm{Ker}(\Delta_\alpha)^\perp$ is the $L^2$-orthogonal complement then using Banach open mapping theorem for the bijective continuous map $\Delta_\alpha|_{\mathrm{Ker}(\Delta_\alpha)}$, the map $\big(\Delta_\alpha|_{\mathrm{Ker}(\Delta_\alpha)}\big)^{-1}$ is continuous. Also, using the fact that $\mathrm{Ker}(\Delta_\alpha)=\mathrm{Ker}(d_\nabla)$, $d^\nabla|_{\mathrm{Ker}(\Delta_\alpha)^\perp}=\mathrm{im}(d^\nabla)$. But $\mathrm{im}(d^\nabla)$ lies in the subspace $\big((d^\nabla)^\star\big)^{-1}\big(\mathrm{im}(\Delta_\alpha)\big)$, which is kernel of the continuous map $\mathrm{id}_X-d^\nabla\circ \Delta_\alpha|_{\mathrm{Ker}(\Delta_\alpha)^\perp}\circ (d^\nabla)^\star$ on $X= \big((d^\nabla)^\star\big)^{-1}(\mathrm{im}(\Delta_\alpha)$. We have, $\mathrm{im}(d^\nabla)$ is closed subspace of $\mathcal{A}^1_\la{L}(X,\mathrm{End}(E))$, furthermore 
we have the $L^2$ orthogonal decomposition,
$$
\mathcal{A}^1_\la{L}(X,\mathrm{End}(E))_l=\mathrm{im}(d^\nabla)\oplus \big(\mathrm{im}(d^\nabla)\big)^\perp
$$
But using the property of adjoint operator $\langle d^\nabla(\alpha),\beta\rangle_{L^2}=\langle\alpha,(d^\nabla)^\star(\beta)\rangle_{L^2}$, we have $\big(\mathrm{im}(d^\nabla)\big)^\perp=\mathrm{ker}(d^\nabla)^\star$ and the required $L^2$-orthogonal decomposition
$$
\mathcal{A}^1_\la{L}(X,\mathrm{End}(E))_l=\mathrm{im}(d^\nabla)\oplus \mathrm{ker}(d^\nabla)^\star
$$
\end{proof}
\begin{lemma}\label{irredopen}
The space of real (respectively, quaternionic) irreducible connections $\widehat{A}(E,\la{L})_l^{\inv{}}\subset A(E,\la{L})_l^{\inv{}}$ is an open subset, for $l>\frac{1}{2}\mathrm{dim}(M)$
\end{lemma}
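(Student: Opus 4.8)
The plan is to show that reducibility is a closed condition inside $A(E,\la{L})_l^{\inv{}}$, so that its complement $\widehat{A}(E,\la{L})_l^{\inv{}}$ is open. First I record the pointwise criterion. For $\nabla'=\nabla_0+\alpha'\in A(E,\la{L})_l^{\inv{}}$ the operator $(\nabla')^{\mathrm{End}(E)}$ acts by $\gamma\mapsto d^{\nabla_0}\gamma+[\alpha',\gamma]$, and since $\Overline[1.3pt]{\nabla'}=\nabla'$ the real-structure involution on $\mathcal{A}^0_\la{L}(X,\mathrm{End}(E))$ (the $\C$-antilinear map with fixed locus $\mathcal{A}^0_\la{L}(X,\mathrm{End}(E))^{\inv{}}$) commutes with $(\nabla')^{\mathrm{End}(E)}$. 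Hence $\mathrm{ker}\big((\nabla')^{\mathrm{End}(E)}\big)$ is a complex subspace stable under that involution, so it is the complexification of $K(\nabla'):=\mathrm{ker}\big((\nabla')^{\mathrm{End}(E)}\big)\cap \mathcal{A}^0_\la{L}(X,\mathrm{End}(E))^{\inv{}}_{l+1}$, which always contains $\R.\mathrm{id}_E$. By Theorem \ref{kernel1}, $\nabla'$ is reducible iff $K(\nabla')\supsetneq \R.\mathrm{id}_E$, and by the $L^2$-orthogonal decomposition \eqref{decomp} this happens iff there is $v\in \big[\mathcal{A}^0_\la{L}(X,\mathrm{End}(E))^{\inv{}}_{l+1}\big]^0$ with $\|v\|_{L^2}=1$ and $\Delta_{\alpha'}v=(d^{\nabla'})^\star d^{\nabla'}v=0$.

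Now suppose $\nabla_n=\nabla_0+\alpha_n\to \nabla=\nabla_0+\alpha$ in $A(E,\la{L})_l^{\inv{}}$ with every $\nabla_n$ reducible, and pick $v_n\in \big[\mathcal{A}^0_\la{L}(X,\mathrm{End}(E))^{\inv{}}_{l+1}\big]^0$ with $\|v_n\|_{L^2}=1$ and $\Delta_{\alpha_n}v_n=0$. From the formula $\Delta_{\alpha_n}=\Delta_0+d^{\nabla_0}\circ\mathrm{ad}(\alpha_n)\circ i+i\circ(\mathrm{ad}(\alpha_n))^\star\circ\mathrm{ad}(\alpha_n)\circ i+i\circ(\mathrm{ad}(\alpha_n))^\star\circ d^{\nabla_0}$ one sees that all $\Delta_{\alpha_n}$ have the same fixed invertible principal symbol (that of $\Delta_0$) and differ from $\Delta_0$ by bounded operators whose norms stay bounded (Sobolev multiplication); so the elliptic a priori estimate for $\Delta_0$, together with interpolation to absorb the intermediate norm, yields a constant $C$ independent of $n$ with
\[
\|v_n\|_{L^2_{l+1}}\le C\big(\|\Delta_{\alpha_n}v_n\|_{L^2_{l-1}}+\|v_n\|_{L^2}\big)=C .
\]
By the Rellich compact embedding $\mathcal{A}^0_\la{L}(X,\mathrm{End}(E))_{l+1}\hookrightarrow \mathcal{A}^0_\la{L}(X,\mathrm{End}(E))_l$ a subsequence $v_{n_k}\to v$ in $L^2_l$, with $\|v\|_{L^2}=1$ and $v$ still in $\big[\mathcal{A}^0_\la{L}(X,\mathrm{End}(E))^{\inv{}}\big]^0$ (both are $L^2$-closed conditions). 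Passing to the limit in $d^{\nabla_0}v_{n_k}+[\alpha_{n_k},v_{n_k}]=0$, where $d^{\nabla_0}v_{n_k}\to d^{\nabla_0}v$ in $L^2_{l-1}$ and $[\alpha_{n_k},v_{n_k}]\to [\alpha,v]$ in $L^2_l$ by joint continuity of the Sobolev multiplication (here $l>\frac{1}{2}\mathrm{dim}_\R X$), gives $\nabla^{\mathrm{End}(E)}v=d^{\nabla_0}v+[\alpha,v]=0$; ellipticity of $d^{\nabla_0}$ (the anchor is surjective) then promotes $v$ to $\mathcal{A}^0_\la{L}(X,\mathrm{End}(E))^{\inv{}}_{l+1}$. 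Thus $v$ is a nonzero element of $K(\nabla)$ orthogonal to $\mathrm{id}_E$, so $K(\nabla)\supsetneq \R.\mathrm{id}_E$ and $\nabla$ is reducible by Theorem \ref{kernel1}, contradicting $\nabla\in\widehat{A}(E,\la{L})_l^{\inv{}}$. Hence $\widehat{A}(E,\la{L})_l^{\inv{}}$ is open in $A(E,\la{L})_l^{\inv{}}$.

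The one delicate point is the uniformity of the elliptic estimate, i.e. that the constant controlling $\|v_n\|_{L^2_{l+1}}$ by $\|v_n\|_{L^2}$ on $\mathrm{ker}\,\Delta_{\alpha_n}$ does not degenerate as $n\to\infty$; this is exactly where the continuous dependence of $\Delta_{\alpha_n}$ on $\alpha_n$ and the Fredholm/ellipticity facts recorded just before Lemma \ref{decomp1} enter, and everything else is a routine limiting argument. An alternative packaging, avoiding even this estimate, is to observe that $\nabla'\mapsto d^{\nabla'}$ is a continuous family of semi-Fredholm operators $\big[\mathcal{A}^0_\la{L}(X,\mathrm{End}(E))^{\inv{}}_{l+1}\big]^0\to \mathcal{A}^1_\la{L}(X,\mathrm{End}(E))^{\inv{}}_l$ (closed range by Lemma \ref{decomp1}, finite-dimensional kernel equal to $\mathrm{ker}\,\Delta_{\alpha'}$); since $\dim\mathrm{ker}$ is upper semicontinuous along such a family and vanishes at an irreducible $\nabla$, it vanishes on a whole neighbourhood of $\nabla$, which again gives the openness.
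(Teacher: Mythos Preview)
Your argument is correct. The paper's own proof is considerably shorter and is essentially your ``alternative packaging'': it observes that the assignment $\nabla\mapsto\Delta_\alpha$ is a continuous family of Fredholm operators $\mathcal{A}^0_\la{L}(X,\mathrm{End}(E))_{l+1}\to\mathcal{A}^0_\la{L}(X,\mathrm{End}(E))_{l-1}$, invokes the upper semicontinuity of $\nabla\mapsto\dim\ker\Delta_\alpha$ (citing \cite{CJADSD}), notes $\ker\Delta_\alpha=\ker d^\nabla$ has minimal dimension exactly on $\widehat{A}(E,\la{L})_l$, and then intersects with the fixed locus of $\inv{}$ to get openness in $A(E,\la{L})_l^{\inv{}}$.

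Your primary route---picking unit kernel elements $v_n$, deriving a uniform $L^2_{l+1}$ bound from the elliptic estimate for $\Delta_0$ plus the bounded perturbation $\Delta_{\alpha_n}-\Delta_0$, extracting a Rellich limit, and passing to the limit in $d^{\nabla_0}v_{n_k}+[\alpha_{n_k},v_{n_k}]=0$---is a genuinely different, hands-on proof that in effect \emph{re-proves} the upper semicontinuity result the paper quotes. What you gain is self-containedness (no external Fredholm-family reference) and an explicit mechanism; what the paper's approach gains is brevity and the observation that nothing specific to this setting is needed beyond Fredholmness of $\Delta_\alpha$. One small wording point: $d^{\nabla_0}$ is not elliptic per se, only overdetermined elliptic (injective symbol, by surjectivity of the anchor); the regularity step you invoke is really elliptic regularity for $\Delta_0$, or alternatively you can skip that step entirely since boundedness in $L^2_{l+1}$ already puts the weak limit $v$ in $L^2_{l+1}$.
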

\begin{proof}
Let $\mathcal{F}(\mathcal{A}^0_{\la{L},l+1},\mathcal{A}^0_{\la{L},l-1})$ be the collection of all Fredholm operators between the Sobolev spaces $\mathcal{A}^0_\la{L}(X,\mathrm{End}(E))_{l+1}$ and $\mathcal{A}^0_\la{L}(X,\mathrm{End}(E))_{l-1}$, the composition of maps
$$
A(E,\la{L})_l\xrightarrow{\nabla\mapsto \Delta_\alpha} \mathcal{F}(\mathcal{A}^0_{\la{L},l+1},\mathcal{A}^0_{\la{L},l-1})\xrightarrow{\Delta_\alpha\mapsto \mathrm{dim}\textrm{ ker}(\Delta_\alpha)} \mathbb{R}
$$
is upper semi-continuous (see \cite{CJADSD}). Note that, for a gauge transformation $\phi$, 
$$
\phi\odot \nabla=\nabla\iff d^\nabla(\phi)=0\quad (\text{see theorem }\ref{kernel})
$$
and $\mathrm{Ker}\big((d^\nabla)^\star\circ d^\nabla)\big)=\mathrm{Ker}(d^\nabla)$ along with $\mathrm{dim}(d^\nabla)\geq 1$ implies that $\widehat{A}(E,\la{L})_l$ is an open subset of $A(E,\la{L})$. Furthermore, the fixed point set $\widehat{A}(E,\la{L})^{\inv{}}_l$ is an open subset of Hilbert space $A(E,\la{L})_l^{\inv{}}$ (fixed points set of $A(E,\la{L})_l$ with induced topology).
\end{proof}
\begin{remark}
Since $B(E,\la{L})_l^{\inv{}}$ has quotient topology induced from the topology of $A(E,\la{L})^{\inv{}}_l$ and $\widehat{B}(E,\la{L})^{\inv{}}_l=p(\widehat{A}(E,\la{L})^{\inv{}}_l)$, using above theorem the quotient space $\widehat{B}(E,\la{L})^{\inv{}}_l$ is open in $B(E,\la{L})_l^{\inv{}}$.
\end{remark}
Consider a fixed Sobolev real (respectively, quaternionic) $\la{L}$-connection $\nabla\in A(E,\la{L})_l^{\inv{}}$ and the Hilbert submanifold of $A(E,\la{L})_l^{\inv{}}$,
$$
\mathcal{O}_{\nabla,\epsilon}^{\inv{}}=\{\nabla+\alpha\mid \alpha\in \mathcal{A}^1_\la{L}(X,\mathrm{End}(E))_l^{\inv{}}, (d^\nabla)^\star(\alpha)=0\text{ and }||\alpha||_{L^2_l}<\epsilon\}
$$
The tangent space is $T_\nabla(\mathcal{O}_{\nabla,\epsilon}^{\inv{}})=(\mathrm{ker}(d^\nabla)^\star)^{\inv{}}$. By taking $\epsilon$ small enough and using lemma \ref{irredopen}, we can assume that $\mathcal{O}_{\nabla,\epsilon}^{\inv{}}\subset \widehat{A}(E,\la{L})_l^{\inv{}}$.

\begin{theorem}[Main theorem]
The space $\widehat{B}(E,\la{L})^{\inv{}}_l$ is a locally Hausdorff Hilbert manifold and $\widehat{p}:\widehat{A}(E,\la{L})^{\inv{}}_l\rightarrow \widehat{B}(B,\la{L})^{\inv{}}_l$ is a principal $\mathrm{Gau}^r_{l+1}$ bundle for $l>\frac{1}{2}\mathrm{dim}_\mathbb{R}X$.
\end{theorem}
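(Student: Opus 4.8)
The plan is to build an atlas on $\widehat{B}(E,\la{L})^{\inv{}}_l$ out of the Coulomb slices $\mathcal{O}_{\nabla,\epsilon}^{\inv{}}$ introduced above, exactly as in the complex case of \cite{SK} and \cite{LK}, and to read off the principal bundle structure from the resulting local model. Fix an irreducible real (respectively, quaternionic) Sobolev connection $\nabla\in\widehat{A}(E,\la{L})^{\inv{}}_l$ and, shrinking $\epsilon$ by Lemma~\ref{irredopen}, assume $\mathcal{O}_{\nabla,\epsilon}^{\inv{}}\subset\widehat{A}(E,\la{L})^{\inv{}}_l$. Consider the smooth action map
\[
\Psi:\mathrm{Gau}(E)^{\inv{}}_{l+1}\times\mathcal{O}_{\nabla,\epsilon}^{\inv{}}\longrightarrow\widehat{A}(E,\la{L})^{\inv{}}_l,\qquad(\phi,\nabla')\longmapsto\phi\odot\nabla'.
\]
Since $\R^\star\mathrm{id}_E$ acts trivially, $\Psi$ factors through a map $\overline{\Psi}$ on $(\mathrm{Gau}(E)_{l+1}^{\inv{}})^r\times\mathcal{O}_{\nabla,\epsilon}^{\inv{}}$.

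First I would compute the differential of $\overline{\Psi}$ at $([\mathrm{id}_E],\nabla)$. Using the expansion $\nabla^\phi=\nabla+\mathrm{id}_{\la{L}^\star}\otimes\phi^{-1}\circ(\nabla^{\mathrm{End}(E)}\phi)$ from Remark~\ref{gaugeconnection}, this differential is
\[
\big[\mathcal{A}^0_\la{L}(X,\mathrm{End}(E))^{\inv{}}_{l+1}\big]^0\oplus\big(\mathrm{ker}(d^\nabla)^\star\big)^{\inv{}}\longrightarrow\mathcal{A}^1_\la{L}(X,\mathrm{End}(E))^{\inv{}}_l,\qquad(\gamma,\alpha)\longmapsto d^\nabla(\gamma)+\alpha,
\]
where $d^\nabla$ is the tangential map $d^{\nabla_0}+\mathrm{ad}(\alpha)\circ i$. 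By Theorem~\ref{kernel1} one has $\mathrm{ker}(d^\nabla)=\R\mathrm{id}_E$, so $d^\nabla$ is injective on the trace-zero part and its image is still all of $\mathrm{im}(d^\nabla)$; the real-part version of the $L^2$-orthogonal decomposition of Lemma~\ref{decomp1} applied to $\mathrm{End}(E)$ (legitimate since $d^\nabla$ commutes with the involution by Theorem~\ref{gaugeinv}) then shows this differential is a bounded isomorphism of Hilbert spaces. The inverse function theorem makes $\overline{\Psi}$ a local diffeomorphism near $([\mathrm{id}_E],\nabla)$.

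Next I would prove the slice theorem: after further shrinking $\epsilon$, the restricted orbit map $(\mathrm{Gau}(E)_{l+1}^{\inv{}})^r\times\mathcal{O}_{\nabla,\epsilon}^{\inv{}}\to\widehat{A}(E,\la{L})^{\inv{}}_l$ is injective and a homeomorphism onto a $(\mathrm{Gau}(E)_{l+1}^{\inv{}})^r$-saturated open neighbourhood of the orbit of $\nabla$. Injectivity amounts to: if $\phi\odot(\nabla+\alpha)=\nabla+\alpha'$ with $\alpha,\alpha'$ in the slice, then $\phi\in\R^\star\mathrm{id}_E$ and $\alpha=\alpha'$; I would argue by contradiction, taking sequences $\phi_n$ and $\alpha_n,\alpha_n'\to 0$ in $\mathcal{O}_{\nabla,\epsilon}^{\inv{}}$ with $\phi_n\odot(\nabla+\alpha_n)=\nabla+\alpha_n'$, normalising $\phi_n$ in $L^2$, and using the elliptic estimate for the Fredholm Laplacian $\Delta_0$ together with the compact embedding $\mathcal{A}^0_\la{L}(X,\mathrm{End}(E))^{\inv{}}_{l+1}\subset\mathcal{A}^0_\la{L}(X,\mathrm{End}(E))^{\inv{}}_l$ to extract a limit $\phi_\infty$ with $\phi_\infty\odot\nabla=\nabla$; irreducibility forces $\phi_\infty\in\R^\star\mathrm{id}_E$, and a bootstrap then upgrades the convergence to a contradiction. \emph{This uniform injectivity estimate is the main obstacle}, since it is the one point where elliptic regularity, Sobolev multiplication in the range $l>\tfrac12\dim_\R X$, and irreducibility must all be combined; the remaining steps are formal.

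Granting the slice theorem, the composite $\mathcal{O}_{\nabla,\epsilon}^{\inv{}}\xrightarrow{\ \widehat{p}\ }\widehat{B}(E,\la{L})^{\inv{}}_l$ is injective (by injectivity of the orbit map on the slice), open (by openness of $\Psi$ and the quotient topology), hence a homeomorphism onto an open set, giving a chart modelled on the Hilbert space $\big(\mathrm{ker}(d^\nabla)^\star\big)^{\inv{}}$. On overlaps two such charts differ by $\nabla'\mapsto\big(\text{the unique small }\phi\big)\odot\nabla'$, which is smooth because it is $\overline{\Psi}^{-1}$ followed by projection onto the slice factor; hence $\widehat{B}(E,\la{L})^{\inv{}}_l$ is a Hilbert manifold, and it is locally Hausdorff because each chart is a metrizable open set (global Hausdorffness is not expected, as distinct nearby orbits need not be uniformly separated). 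Finally, over the image of each chart, $\widehat{p}^{-1}$ of it is identified by $\overline{\Psi}$, equivariantly, with $(\mathrm{Gau}(E)_{l+1}^{\inv{}})^r\times\mathcal{O}_{\nabla,\epsilon}^{\inv{}}$; since the reduced real gauge group acts freely on $\widehat{A}(E,\la{L})^{\inv{}}_l$ (the isotropy of any irreducible connection is $\R^\star\mathrm{id}_E$, killed in $(\mathrm{Gau}(E)_{l+1}^{\inv{}})^r$ by Theorem~\ref{kernel1}), these equivariant local trivialisations exhibit $\widehat{p}$ as a principal $(\mathrm{Gau}(E)_{l+1}^{\inv{}})^r=\mathrm{Gau}^r_{l+1}$-bundle, as claimed.
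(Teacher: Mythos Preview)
Your approach is correct and proceeds along standard slice-theorem lines, but it differs from the paper's in one significant respect: where you work entirely inside the real (resp.\ quaternionic) category and establish both the local diffeomorphism and the slice injectivity from scratch---computing the differential of $\overline{\Psi}$ explicitly and then running an elliptic/compactness argument for uniform injectivity---the paper instead \emph{reduces both steps to the already-known complex case} in \cite{LK}. Concretely, the paper takes as input that the complex slice map $\Psi_\nabla:\mathrm{Gau}(E)^r_{l+1}\times\mathcal{O}_{\nabla,\epsilon}\to\widehat{A}(E,\la{L})_l$ is a local diffeomorphism and that $p_{\nabla,\epsilon}$ is injective (both from \cite{LK}); it then restricts to the $\sigma$-fixed loci on each side, using Theorem~\ref{gaugeinv} to check that the restriction is well-defined, and invokes Proposition~\ref{gaugeorbitprop} to convert the statement ``two real slice points lie in the same $\mathrm{Gau}(E)$-orbit'' into ``they lie in the same $\mathrm{Gau}(E)^{\inv{}}$-orbit.'' This completely sidesteps the sequential-compactness argument you flag as the main obstacle. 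Your route is more self-contained and makes explicit which elliptic estimates are actually in play; the paper's route is shorter and emphasises that the real moduli space is essentially the fixed-point locus of an involution on the complex one, so that nothing analytically new needs to be proved. Both paths lead to the same chart maps and the same verification of the principal $\mathrm{Gau}^r_{l+1}$-bundle structure.
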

\begin{proof}
The proof is given in steps as follows.
\begin{enumerate}
\item \textbf{Claim:} The map 
$$
\Psi_\nabla^{\inv{}}:\left[\mathrm{Gau}(E)^{\inv{}}_{l+1}\right]^r\times \mathcal{O}_{\nabla,\epsilon}^{\inv{}}\rightarrow \widehat{A}(E,h)^{\inv{}}_l
$$
given by $\Psi_{\nabla}^{\inv{}}(\phi,\nabla+\alpha)=\phi\odot(\nabla+\alpha)$ is smooth and a local diffeomorphism.

\item[]\textbf{Proof:} In \cite{LK}, author has proved, the map
$$
\Psi_\nabla:\mathrm{Gau}(E)_{l+1}^r\times \mathcal{O}_{\nabla,\epsilon}\rightarrow \widehat{A}(E,\la{L})_l
$$
a local diffeomorphism that is, for $\epsilon$ small enough there are nbd $N_{\mathrm{id}_E}$ of $\mathrm{id}_E\in \mathrm{Gau}(E)^r_{l+1}$ and $\mathcal{U}_\nabla$ of $\nabla\in A(E,\la{L})_l$ such that the map $\Psi_\nabla:N_{\mathrm{id}_E}\times \mathcal{O}_{\nabla,\epsilon}\rightarrow \mathcal{U}_\nabla$ is diffeomorphism.

Let $\Psi_\nabla^{\inv{}}|_{N_{\mathrm{id}_E}^{\inv{}}\times \mathcal{O}_{\nabla,\epsilon}^{\inv{}}}\equiv \Psi_\nabla|_{N_{\mathrm{id}_E}^{\inv{}}\times \mathcal{O}_{\nabla,\epsilon}^{\inv{}}}$ then $\Psi_{\nabla}^{\inv{}}$ will be bijective using Theorem \ref{gaugeinv} and will be diffeomorphism because $\Psi_\nabla|_{N_{\mathrm{id}_E}^{\inv{}}\times \mathcal{O}_{\nabla,\epsilon}^{\inv{}}}$ is diffeomorphism.
\item \textbf{Claim:} For $\epsilon>0$ small enough, the map $p_{\nabla,\epsilon}^{\inv{}}=p|_{\mathcal{O}^{\inv{}}_{\nabla,\epsilon}}:\mathcal{O}_{\nabla,\epsilon}^{\inv{}}\rightarrow \widehat{B}(E,\la{L})_l$ is injective.
\item[]\textbf{Proof:}Using Proposition \ref{gaugeorbitprop} and injectivity of $p_{\nabla,\epsilon}$ (see \cite{LK}), we have the injectivity of $p_{\nabla,\epsilon}^{\inv{}}$.
\item \textbf{Claim:}The subset $\mathcal{U}^{\inv{}}_{\nabla,\epsilon}=p^{\inv{}}_{\nabla,\epsilon}(\mathcal{O}^{\inv{}}_{\nabla,\epsilon})$ is an open subset in $\widehat{B}(E,\la{L})_l^{\inv{}}$ and the map
$$
\Psi^{\inv{}}_\nabla:\mathrm{Gau}(E)_{l+1}^{\inv{}}\times \mathcal{O}_{\nabla,\epsilon}^{\inv{}}\rightarrow \left[p^{\inv{}}_{\nabla,\epsilon}\right]^{-1}(\mathcal{U}^{\inv{}}_{\nabla,\epsilon})
$$
is diffeomorphism.
\item[]\textbf{Proof:} For any $\phi\in \mathrm{Gau}(E)_{l+1}^{\inv{}}$, take a nbd $\mathcal{W}=L_\phi(\mathcal{N}_{\mathrm{id}_E})$, where $L_\phi$ is left translation by $\phi\in \mathrm{Gau}(E)_{l+1}^{\inv{}}$. Then,
$$
\Psi|_{\mathcal{W}\times \mathcal{O}_{\nabla,\epsilon}^{\inv{}}}\equiv \tilde{\lambda}_\phi\circ \Psi|_{\mathcal{N}_{\mathrm{id}_E}\times \mathcal{O}_{\nabla,\epsilon}^{\inv{}}}\circ \big(L_{\phi^{-1}}\times \mathrm{id}_{\widehat{A}(E,\la{L})^{\inv{}}_l}\big)|_{\mathcal{W}\times \mathcal{O}_{\nabla,\epsilon}^{\inv{}}}
$$
is diffeomorphism, where $\tilde{\lambda}_\phi:\widehat{A}(E,h)^{\inv{}}_l\rightarrow \widehat{A}(E,h)^{\inv{}}_l$ is left multiplication by $\phi$.
\item For any $\nabla'\in A(E,\la{L})_l^{\inv{}}$ such that $p(\nabla')\in \mathcal{U}_{\nabla,\epsilon}^{\inv{}}$, define 
$$
g_\nabla(\nabla')=\mathrm{pr}_1\circ \Psi_\nabla^{-1}(\nabla')
$$
Now, define the chart map $\sigma_\nabla:\mathcal{U}_{\nabla,\epsilon}^{\inv{}}\rightarrow \mathcal{O}_{\nabla,\epsilon}^{\inv{}}$ satisfying,
$$
\sigma_\nabla(p^{\inv{}}_{\nabla,\epsilon}(\nabla'))=\left[g_\nabla(\nabla')\right]^{-1}\odot \nabla'
$$
for $\nabla'\in p_{\nabla,\epsilon}^{\inv{}}(\mathcal{U}_{\nabla,\epsilon}^{\inv{}})$.

\item[]\textbf{Claim:} The Hilbert manifold $\widehat{B}(E,\la{L})_l^{\inv{}}$ is a smooth Hilbert manifold.
\item[]\textbf{Proof:} We need to verify, for two charts $\mathcal{U}_{\nabla,\epsilon}^{\inv{}}$ and $\mathcal{U}_{\nabla',\epsilon'}^{\inv{}}$ the transition map 
$$
\sigma_\nabla\circ \sigma_{\nabla'}^{-1}:\sigma_{\nabla'}\big(\mathcal{U}_{\nabla',\epsilon'}\cap \mathcal{U}_{\nabla,\epsilon}\big)\rightarrow \sigma_{\nabla}\big(\mathcal{U}_{\nabla',\epsilon'}\cap \mathcal{U}_{\nabla,\epsilon}\big)
$$ 
is smooth.
Take $\nabla'+\alpha\in \mathcal{O}_{\nabla',\epsilon'}^{\inv{}}$ such that $p_{\nabla',\epsilon'}^{\inv{}}(\nabla'+\alpha)\in \mathcal{U}_{\nabla',\epsilon'}\cap \mathcal{U}_{\nabla,\epsilon}$, we have
$$
\sigma_\nabla\circ \sigma_{\nabla'}^{-1}(\nabla'+\alpha)=\sigma_\nabla(p^{\inv{}}_{\nabla,\epsilon}(\nabla'+\alpha)\big)=\left[g_\nabla(\nabla'+\alpha)\right]^{-1}\odot (\nabla'+\alpha)
$$
Hence, the transition map is smooth as gauge action is smooth and the map $g_\nabla$ is smooth.
\end{enumerate}
\end{proof}

\end{document}